\newcommand{\compl}[1]{#1^c} 
\newcommand{\compsptc}[1]{C_c^{#1}} 
\renewcommand{\div}{\mathrm{div}\ } 
\renewcommand{\epsilon}{\varepsilon}
\let\foralltemp\forall
\renewcommand{\forall}{\foralltemp\, }
\newcommand{\grad}{\mathrm{grad}\,}
\newcommand{\haumeas}[2][]{\mathcal{H}_{#1}^{#2}} 
\newcommand{\indicator}[1]{\mathds{1}_{#1}} 
\DeclareDocumentCommand{\integral}{ O{} O{} m O{x}}{\int_{#1}^{#2} #3\, \mathrm{d}#4} 
\newcommand{\Natural}{\mathbb{N}}
\newcommand{\norm}[2][]{\| #2 \|_{#1}} 
\newcommand{\Real}{\mathbb{R}} 
\newcommand{\set}[2]{\left\{ #1 : #2 \right\}} 
\newcommand{\setdiff}{\backslash} 
\newcommand{\simpleset}[1]{\left\{ #1 \right\}} 
\newcommand{\support}{\mathrm{spt}\ } 
\newcommand{\unitsphere}[1]{\mathbb{S}^{#1-1}}
\newcounter{object}[section]
\renewcommand{\theobject}{\arabic{section}.\arabic{object}}
\newenvironment{corollary}[1][]{\medskip%
			 \refstepcounter{object}%
			 \noindent \textbf{Corollary \theobject%
			 \ifthenelse{\equal{#1}{}}{.}{\ (#1).}\ }\itshape }{\medskip}
\newenvironment{lemma}{\rmfamily \medskip%
			  \refstepcounter{object}%
			  \noindent \textbf{Lemma \theobject .}\ \itshape}{}
\newenvironment{proposition}[1][]{\rmfamily \medskip%
			  \refstepcounter{object}%
			  \noindent \textbf{Proposition \theobject%
			 \ifthenelse{\equal{#1}{}}{.}{\ (#1).}\ }\itshape }{}
\newenvironment{proof}[1][]{
  \noindent \textit{Proof\ifthenelse{\equal{#1}{}}{}{ of #1}.}\ }{\hfill $\blacksquare$}
\newenvironment{theorem}[1][]{\medskip%
			 \refstepcounter{object}%
			 \noindent \textbf{Theorem \theobject%
			 \ifthenelse{\equal{#1}{}}{.}{\ (#1).}\ }\itshape }{}
\newcommand{\gab}{g_{\alpha \beta}}
\newcommand{\gabhat}{\hat g_{\alpha \beta}}
\title{Fractional Sobolev norms and BV functions on manifolds}
\author{Andreas Kreuml\footnote{partially supported by FWF project I3027-N35} , Olaf Mordhorst}
\date{}
\begin{document}

\maketitle

\begin{abstract}
	The bounded variation seminorm and the Sobolev seminorm on compact manifolds are represented as a limit of fractional Sobolev seminorms.
	This establishes a characterization of functions of bounded variation and of Sobolev functions on compact manifolds.
	As an application the special case of sets of finite perimeter is considered.
\end{abstract}

{\bf Keywords:} fractional Sobolev norms, BV functions, sets of finite perimeter, $s$-perimeter, non-local functionals


\section{Introduction and main results}

In the early 2000's the study of fractional $s$-seminorms gained new interest, when Maz'ya \& Shaposhnikova \cite{mazya} on one hand, and Bourgain, Brezis \& Mironescu \cite{bbm} on the other hand showed, that they can be seen as intermediary functionals between the $L^1$-norm and the $W^{1,p}$-seminorms.
For $\Omega \subseteq \Real^n$ open, $0 < s < 1$ and $1 \le p < \infty$, Gagliardo \cite{gagliardo} introduced the fractional Sobolev space $W^{s,p}(\Omega)$ as the set of all functions $f \in L^p(\Omega)$ such that the seminorm
\begin{equation*}
	|f|_{W^{s,p}} := \left( \integral[\Omega]{
			\integral[\Omega]{
				\frac{|f(x)-f(y)|^p}{|x-y|^{n+sp}}
			}[y]
	} \right)^{\frac 1 p}
\end{equation*}
is finite. We say that \(f\) is in the Sobolev space \(W^{1,p}(\Omega)\) if the weak gradient \(\nabla f\) exists and such that the seminorm \(|f|_{W^{1,p}} := \left(\integral[\Omega]{|\nabla f(x)|^p} \right)^{\frac 1 p}\) is finite. If \(p=1\) we say that  \(f\) is a function of bounded variation if the  seminorm
\begin{equation*}
	|Df|(\Omega) := \sup \set{\integral[\Omega]{f\, \div T}}{T \in \compsptc 1 (\Omega;\Real^n), |T(x)| \le 1\ \text{for all}\ x \in \Omega},
\end{equation*}
is finite and we denote by \(BV(\Omega)\) the space of functions of bounded variation.
Here, $\compsptc 1 (\Omega; \Real^n)$ denotes the set of all continuously differentiable functions $T: \Omega \to \Real^n$ such that the support is compact in $\Omega$.
We use the convention that \(|f|_{W^{1,p}}=\infty\) if \(f\not\in W^{1,p}(\Omega)\) and \(|Df|(\Omega)=\infty\) if \(f\not\in BV(\Omega)\). 

It is quite natural to ask if fractional differentiability for every order strictly between \(0\) and \(1\) implies differentiability of order \(1\) in the Sobolev sense. Indeed, Bourgain, Brezis \& Mironescu proved for \(1<p<\infty\) and \(\Omega\) smooth and bounded that
for every \(f\in L^p(\Omega)\) we have
\begin{equation}
\label{bbmlp}
\lim_{s\rightarrow 1^-}(1-s)\integral[\Omega]{
			\integral[\Omega]{
				\frac{|f(x)-f(y)|^p}{|x-y|^{n+sp}}
			}[y]
		}= \alpha_{n,p} |f|_{W^{1,p}}^p\ ,
\end{equation}
where $\alpha_{n,p}$ is a constant only depending on \(n\) and \(p\) (see \cite{bbm}, Corollary 2). This convergence result is false in general if \(p=1\) since the class \(W^{1,1}(\Omega)\) is simply too small in this case. This led Bourgain, Brezis and Mironescu to consider the class \(BV(\Omega)\): A function \(f\in L^1(\Omega)\) is in \(BV(\Omega)\) if and only if 
\begin{equation}
\label{bbmchar}
\liminf_{s \to 1}\ (1-s)\integral[\Omega]{
			\integral[\Omega]{
				\frac{|f(x)-f(y)|}{|x-y|^{n+s}}
			}[y]
		}<\infty\ .
\end{equation}
The question of the convergence of the \(|\cdot|_{W^{s,1}}\)-seminorms was answered by D\'{a}vila in \cite{davila} who proved that
\begin{equation}
	\label{bbmsemi}
	\lim_{s \to 1^-} (1-s)\integral[\Omega]{
			\integral[\Omega]{
				\frac{|f(x)-f(y)|}{|x-y|^{n+s}}
			}[y]
	} = 2 |B^{n-1}| |Df|(\Omega)
\end{equation}
for bounded \(\Omega\) with Lipschitz-boundary and \(f\in BV(\Omega)\). 
Here, $B^k$ denotes the $k$-dimensional Euclidean unit ball and $|B^k|$ its Lebesgue measure of corresponding dimension.
By a counterexample of Brezis \cite{brezis}, the  results \eqref{bbmlp}, \eqref{bbmchar} and \eqref{bbmsemi} fail to hold in general on non-smooth open sets $\Omega$.

Still, Leoni \& Spector \cite{leonispector} recovered a variant of \eqref{bbmsemi} for arbitrary open sets.

Since then, many related questions and generalizations were studied, ranging from classification results for Sobolev and BV spaces (\cite{brezis}, \cite{leonispector}), anisotropic higher order Sobolev spaces (\cite{pinamonti}), fractional perimeters (\cite{crs}, \cite{cesaroni}), sharp fractional Sobolev and isoperimetric inequalities (\cite{dicastro}, \cite{frankseiringer}, \cite{hurri}, \cite{xiao}),  to anisotropic versions of fractional seminorms and perimeters (\cite{ludwigperimeter}, \cite{ludwignorm}, \cite{xiaoye}).
Fractional seminorms and BV functions cannot only be defined on open subsets of $\Real^n$, but also on Riemannian manifolds or metric measure spaces (\cite{dimarinosquassi}).
Independently, the authors of \cite{carbonaro} and \cite{miranda} showed, that the variation $|Df|(M)$ of a BV function on a manifold can be approximated by evolutions of the function under the heat semigroup. In \cite{ferrari}
a characterization of perimeters in Carnot groups is provided via heat semigroup techniques. The authors raise the question if a characterization of perimeters can also be attained on Riemannian manifolds.

The purpose of this paper is to generalize \eqref{bbmlp}-\eqref{bbmsemi} to the setting of compact Riemannian manifolds. As for example in the papers \cite{bbm}, \cite{davila} and \cite{leonispector} we prove our result in the following slightly more general framework from which the desired convergence of the fractional Sobolev norms follows as a corollary. 

Let $\rho_\sigma: (0,\infty) \to [0,\infty), 1>\sigma > 0$ be a family of functions. These functions are called  \emph{radial mollifiers} if they satisfy the following properties:
\begin{align}
	\label{rhomon} \rho_\sigma\ \text{is monotonically } & \text{decreasing on}\ (0,\infty),\\
	\label{eq:fixedmass}
	\integral[0][\infty]{\rho_\sigma(r) r^{n-1}}[r] & = \frac 1 {\haumeas{n-1}(\unitsphere n)} \quad \text{for all }\sigma ,\\
	\label{eq:tailto0}
	\lim_{\sigma \to 0} \integral[\delta][\infty]{\rho_\sigma(r) r^{n-1}}[r] & = 0, \quad \forall \delta > 0.\\
	\label{rholocuni} \lim_{\sigma \to 0} \sup_{r \in K} \rho_\sigma(r) &= 0,\quad \forall K \subset (0,\infty)\ \text{compact}.
\end{align}
Here, $\haumeas{n-1}$ denotes the $(n-1)$-dimensional Hausdorff measure and $\unitsphere n := \set{x \in \Real^n}{|x| = 1}$ is the Euclidean unit sphere.
We consider a Riemannian manifold $M$ with metric $g$, and denote by $d(\cdot,\cdot)$ the geodesic distance on $M$ and by $\mathrm{d}V_g$ the Riemannian volume form.
Using kernels satisfying \eqref{rhomon}-\eqref{rholocuni} we show:

\begin{theorem}
	\label{convergencerho}
	Let $M$ be a compact connected $n$-dimensional Riemannian manifold and $f \in L^p(M)$ with $p \ge 1$.
	Furthermore, let $(\rho_\sigma)_{\sigma}$ be a family of radial mollifiers.
	\begin{enumerate}
		\item
			If $p > 1$, then 
	\begin{equation*}
		\lim_{\sigma \to 0} \integral[M]{
			\integral[M]{
				\frac{|f(x)-f(y)|^p}{d(x,y)^p} \rho_\sigma(d(x,y))
			}[V_g(y)]
		}[V_g(x)] = K_{p,n} \integral[M]{|\grad f(x)|_g^p}[V_g(x)],
	\end{equation*}
	where the constant $K_{p,n}$ is defined as
	\begin{equation}
		\label{kpn}
		K_{p,n} := \frac{1}{\haumeas{n-1}(\unitsphere n)} \integral[\unitsphere n]{|e \cdot u|^p}[\haumeas{n-1}(u)],
	\end{equation}
	and $e \in \unitsphere n$ is any unit vector.
	In particular, $f \in W^{1,p}(\Omega)$ if and only if
	\begin{equation*}
		\liminf_{\sigma \to 0} \integral[M]{
			\integral[M]{
				\frac{|f(x)-f(y)|^p}{d(x,y)^p} \rho_\sigma(d(x,y))
			}[V_g(y)]
		}[V_g(x)] < \infty. 
	\end{equation*}
\item
	If $p = 1$, then
	\begin{equation*}
		\lim_{\sigma \to 0} \integral[M]{
			\integral[M]{
				\frac{|f(x)-f(y)|}{d(x,y)} \rho_\sigma(d(x,y))
			}[V_g(y)]
		}[V_g(x)] = K_{1,n} |Df|(M),
	\end{equation*}
	with the constant $K_{1,n}$ defined in \eqref{kpn}.
	In particular, $f \in BV(M)$ if and only if
	\begin{equation*}
		\liminf_{\sigma \to 0} \integral[M]{
			\integral[M]{
				\frac{|f(x)-f(y)|}{d(x,y)} \rho_\sigma(d(x,y))
			}[V_g(y)]
		}[V_g(x)] < \infty.
	\end{equation*}
	\end{enumerate}
\end{theorem}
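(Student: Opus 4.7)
The plan is to reduce to the Euclidean theorems of Bourgain--Brezis--Mironescu and D\'avila by localising in geodesic normal coordinates. Compactness of $M$ provides a positive injectivity radius $r_0$, so for every $x\in M$ the exponential map is a diffeomorphism on the ball of radius $r_0$ in $T_xM$, and in normal coordinates at $x$ one has $d(x,\exp_x\xi)=|\xi|_g$ together with $\mathrm{d}V_g(\exp_x\xi)=(1+O(|\xi|^2))\,\mathrm{d}\xi$, with implicit constants uniform in $x$.

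First I would truncate the outer integral at some small $\delta<r_0$: by \eqref{eq:tailto0} and the crude bound $|f(x)-f(y)|^p\le 2^{p-1}(|f(x)|^p+|f(y)|^p)$, the contribution from $\{d(x,y)>\delta\}$ vanishes as $\sigma\to 0$, so it suffices to analyse the integral over $\{d(x,y)\le\delta\}$ chart by chart. For $f\in C^1(M)$, Taylor expansion in normal coordinates at $x$ gives $|f(\exp_x\xi)-f(x)|^p=|\langle\grad f(x),\xi\rangle_g|^p+O(|\xi|^{p+1})$; passing to polar coordinates on $T_xM$ and invoking \eqref{eq:fixedmass}, \eqref{eq:tailto0} together with the definition \eqref{kpn} of $K_{p,n}$ yields
\[ \lim_{\sigma\to 0}\integral[B_\delta(x)]{\frac{|f(x)-f(y)|^p}{d(x,y)^p}\rho_\sigma(d(x,y))}[V_g(y)]=K_{p,n}\,|\grad f(x)|_g^p, \]
uniformly in $x$ once $\delta$ is sent to zero. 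Dominated convergence over $x\in M$ then delivers the theorem for $C^1$ functions.

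For part (i) with $p>1$, I would combine this pointwise limit with a uniform upper bound $\iint\le C\,\|\grad f\|_{L^p(M)}^p$ for smooth $f$, obtained from the same Taylor-expansion bound together with the monotonicity property \eqref{rhomon}. Density of $C^\infty(M)$ in $W^{1,p}(M)$ then extends the convergence to all of $W^{1,p}(M)$. The characterisation statement follows by applying the Euclidean BBM criterion in each chart of a finite atlas with a subordinate partition of unity.

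The main obstacle is part (ii), the BV case. For the upper bound I would use a Meyers--Serrin/Anzellotti--Giaquinta type approximation of $f\in BV(M)$ by smooth $f_k$ with $\|\grad f_k\|_{L^1(M)}\to|Df|(M)$, reduce to the $C^1$ case, and close with Fatou. For the lower bound, adapting D\'avila's duality argument, I would test against smooth vector fields $T$ on $M$ with $|T|_g\le 1$ and establish
\[ \liminf_{\sigma\to 0}\integral[M]{\integral[M]{\frac{|f(x)-f(y)|}{d(x,y)}\rho_\sigma(d(x,y))}[V_g(y)]}[V_g(x)]\ge K_{1,n}\integral[M]{f\,\div T}[V_g], \]
then pass to the supremum over admissible $T$ to get $K_{1,n}|Df|(M)$. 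The technical crux is that both $d(\cdot,\cdot)$ and $\mathrm{d}V_g$ carry Jacobi-field corrections relative to their Euclidean analogues; these must be controlled uniformly as $\sigma$ and $\delta$ tend to zero in a coupled way, so that the curvature remainders do not perturb the sharp constant $K_{1,n}$.
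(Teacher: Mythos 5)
Most of your outline is sound and runs parallel to the paper: the localisation in almost-isometric charts (the paper's Lemma \ref{coveringlemma} plays the role of your normal-coordinate balls), the pointwise limit for $C^1$ functions, the extension to $W^{1,p}$ for $p>1$ by a uniform bound plus density (with Fatou to get the uniform bound for non-smooth competitors), and the upper bound for $p=1$ via smooth approximation of BV functions (Proposition \ref{bvapprox}) and Fatou. The paper packages this bookkeeping as weak-$*$ convergence of the measures $\mu_{\sigma,p}$, but for the theorem as stated your global version of these steps would suffice.

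The genuine gap is your lower bound and, with it, the characterisation in the case $p=1$. You assert that for each fixed smooth field $T$ with $|T|_g\le 1$ one has $\liminf_{\sigma\to 0}\iint\ge K_{1,n}\int_M f\,\mathrm{div}\,T\,\mathrm{d}V_g$, and then take the supremum over $T$. This inequality is true a posteriori, but the duality computation you sketch does not yield it with the sharp constant: writing the double integral in polar form and pairing the directional difference quotients $r^{-1}\bigl(f(\exp_x(r\omega))-f(x)\bigr)$ with test functions built from $T$ produces, after averaging the quadratic form $\omega\mapsto\omega\cdot DT\,\omega$ over the sphere, only a constant of order $\tfrac 1n$, which is strictly smaller than $K_{1,n}$ for $n\ge 2$. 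To reach $K_{1,n}$ one must use $\omega$-dependent test functions of sign type aligned with the polar decomposition $Df=\sigma_f\,|Df|$, i.e.\ $\varphi_\omega\approx\mathrm{sgn}(\sigma_f\cdot\omega)$; but that presupposes $f\in BV(M)$ -- so it cannot deliver the implication ``$\liminf$ finite $\Rightarrow f\in BV(M)$'', which your sketch otherwise does not address for $p=1$ -- and after smoothing the sign, the resulting derivative terms are not controllable when $f$ is merely $L^1$. (D\'avila's own lower bound in fact proceeds by mollification and lower semicontinuity, not by dualising against vector fields.) The paper closes exactly this point differently: it mollifies $f$ inside the charts as in \eqref{convolutionmf}, bounds the nonlocal functional of $f_{k,\delta}$ by that of $f$ up to $(1\pm\epsilon)$-factors, applies the Euclidean result of \cite{bbm} to the Lipschitz functions $f_{k,\delta}$, lets $\delta\to 0$ using lower semicontinuity of the weighted variation $|D\cdot|_w$ together with Lemma \ref{diffchart} and \eqref{weightedvarbyvar}, and obtains the characterisation by assembling global smooth approximants $f_\delta=\sum_k\chi_k f_{k,\delta}$ with a partition of unity and invoking Propositions \ref{sobapprox} and \ref{bvapprox}. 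To repair your argument you would either have to reproduce such a mollification/lower-semicontinuity scheme, or first prove a non-sharp duality bound to conclude $f\in BV(M)$ and only then derive the sharp constant via $\sigma_f$; as written, the key inequality is unsupported, and it is precisely the crux of the case $p=1$.
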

We define the function space $BV(M)$ of BV functions on a Riemannian manifold in Section \ref{notandback} but it is almost the same as in the Euclidean case. The condition (\ref{rhomon}) seems unnecessary but we impose it for technical reasons since our proofs would be less clear otherwise and since our main application is the case of fractional Sobolev norms. The case of not connected manifolds means that \(d(x,y)=\infty\) for \(x,y\) of different connected components and the integrand is interpreted as \(0\) for such \(x,y\). Hence, this just leads to work on the connected components of \(M\) separately which does not bring anything new to the problem. 

As a simple consequence of Theorem \ref{convergencerho} the convergence of $s$-seminorms follows:

\begin{corollary}
	\label{convergencesemi}
	Let $M$ be a compact connected $n$-dimensional Riemannian manifold and $f \in L^p(M), {p \ge 1}$.
	Then
	\begin{equation*}
		\lim_{s \to 1^-} (1-s) \integral[M]{
			\integral[M]{
				\frac{|f(x)-f(y)|^p}{d(x,y)^{n+sp}}
			}[V_g(y)]
		}[V_g(x)] = \frac{\haumeas{n-1}(\unitsphere n) K_{p,n}}{p} \integral[M]{|\grad f(x)|_g^p}[V_g(x)]\ ,
	\end{equation*}
	if $p > 1$, and
	\begin{equation*}
		\lim_{s \to 1^-} (1-s) \integral[M]{
			\integral[M]{
				\frac{|f(x)-f(y)|}{d(x,y)^{n+s}}
			}[V_g(y)]
		}[V_g(x)] = 2 |B^{n-1}| |Df|(M)\ ,
	\end{equation*}
	if $p = 1$.
\end{corollary}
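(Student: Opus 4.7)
The plan is to derive the corollary from Theorem~\ref{convergencerho} by applying it to a family of radial mollifiers that recasts the (weighted) fractional $s$-seminorm as the generic double integral appearing there. Writing $\sigma := 1-s$ and fixing any $R > \diam(M)$, I take
\begin{equation*}
\rho_\sigma(r) := a_\sigma\, r^{p\sigma - n}\,\indicator{(0,R]}(r),\qquad a_\sigma := \frac{p\sigma}{R^{p\sigma}\,\haumeas{n-1}(\unitsphere n)}.
\end{equation*}
The exponent $p\sigma - n$ is negative for small $\sigma$, so monotonicity (\ref{rhomon}) is immediate, and the constant $a_\sigma$ is chosen precisely so that (\ref{eq:fixedmass}) holds. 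Conditions (\ref{eq:tailto0}) and (\ref{rholocuni}) then both follow from $a_\sigma = O(\sigma)$: the tail is $a_\sigma(R^{p\sigma}-\delta^{p\sigma})/(p\sigma)\to 0$, and $\sup_{r\in K}\rho_\sigma \le a_\sigma (\min K)^{p\sigma - n}\to 0$ on any compact $K\subset(0,\infty)$.

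Since $R > \diam(M)$, the indicator is identically $1$ on $M \times M$, and the generalized integral from Theorem~\ref{convergencerho} equals $a_\sigma \integral[M]{\integral[M]{|f(x)-f(y)|^p/d(x,y)^{n+sp}}[V_g(y)]}[V_g(x)]$. Multiplying through by $\sigma/a_\sigma = R^{p\sigma}\haumeas{n-1}(\unitsphere n)/p$, which converges to $\haumeas{n-1}(\unitsphere n)/p$, yields the corollary for $p>1$ with prefactor $\haumeas{n-1}(\unitsphere n) K_{p,n}/p$, and for $p=1$ with the quantity $\haumeas{n-1}(\unitsphere n) K_{1,n}\,|Df|(M)$ on the right.

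To finish the $p=1$ case I identify $\haumeas{n-1}(\unitsphere n) K_{1,n}$ with $2|B^{n-1}|$: writing $e = e_n$ and using the polar parametrization $u = (\sqrt{1-t^2}\,\omega,t)$ of $\unitsphere n$ with $\omega \in \unitsphere{n-1}$, whose surface element is $(1-t^2)^{(n-3)/2}\,d\haumeas{n-2}(\omega)\,dt$, a direct computation gives
\begin{equation*}
\integral[\unitsphere n]{|u\cdot e|}[\haumeas{n-1}(u)] = 2\haumeas{n-2}(\unitsphere{n-1})\integral[0][1]{t(1-t^2)^{(n-3)/2}}[t] = \frac{2\haumeas{n-2}(\unitsphere{n-1})}{n-1} = 2|B^{n-1}|,
\end{equation*}
using $\haumeas{n-2}(\unitsphere{n-1}) = (n-1)|B^{n-1}|$. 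There is no real obstacle in this argument: the analytic content is entirely packaged in Theorem~\ref{convergencerho}, and the only point needing care is the simultaneous vanishing of $a_\sigma$ and $1-s$, whose quotient must be arranged to tend to the explicit nonzero limit claimed above.
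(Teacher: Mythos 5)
Your proposal is correct and follows essentially the same route as the paper: both arguments prove the corollary by plugging the power-law radial mollifiers $\rho_\sigma(r)\propto \sigma\, r^{\sigma p-n}$, normalized so that \eqref{eq:fixedmass} holds, into Theorem~\ref{convergencerho} and identifying the resulting functional with $(1-s)$ times the $s$-seminorm. The only differences are cosmetic: the paper truncates at $r=1$ and must then check that $(1-s)$ times the integral over $\{d(x,y)\ge 1\}$ tends to zero, whereas your truncation at $R>\diam(M)$ makes the identification exact at the cost of the harmless normalization factor $R^{p\sigma}\to 1$; you also verify $\haumeas{n-1}(\unitsphere n)K_{1,n}=2|B^{n-1}|$ explicitly, which the paper leaves implicit.
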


The double integrals on the left-hand side correspond to the fractional Sobolev norms on manifolds in the sense of Gagliardo. Of course, this notion does only make sense in the compact case or if there are at least some bounds on the volume growth of the manifold. 
 
The theory of fractional seminorms can be applied to study the size of the boundary for a large class of sets, leading to the notion of fractional perimeters.
Corollary \ref{convergencesemi} in particular implies that fractional perimeters converge to the perimeter as $s \to 1^-$ up to a constant.
We will discuss both notions in Section \ref{notandback} in further detail.

\section{Notation and background material}
\label{notandback}

Throughout this paper we denote by $M$ a compact connected $n$-dimensional Riemannian manifold of class $C^\infty$.
We denote its metric by $g$ and the function $|V|_g := \sqrt{g(V,V)}$ defines a norm on each tangent space.
If $(U,\phi)$ is a chart on $M$ and $f: M \to \Real$ is a function, then we put $\hat f: \phi(U) \to \Real$ for its coordinate representation.
Furthermore, we write $\gab$ for the components of the metric $g$ with respect to a given chart, i.e. $\gab := g(\frac \partial {\partial x^\alpha}, \frac \partial {\partial x^\beta})$, where $\frac \partial {\partial x^\alpha}, \frac \partial {\partial x^\beta}$ are elements of the tangential bundle of $M$, which we denote by $TM$.
The Riemannian volume form $\mathrm{d}V_g$ gives rise to a measure $\mathrm{Vol}_g$ on $M$ defined by $\mathrm{Vol}_g (A) := \integral[A]{}[V_g]$, $A \subseteq M$ Borel.
We write 
\begin{equation*}
	\support f : = \overline{\set{x \in M}{f(x) \neq 0}}
\end{equation*}
for the support of a function $f: M \to V$, where $V$ is a vector space or bundle, and if $\mathcal F$ is a function space, then $\mathcal F_c$ denotes the subset of $\mathcal F$ consisting of all compactly supported functions.
Open balls with center $x$ and radius $r$ are commonly denoted by $B_r^M(x)$ for geodesic balls on $M$ and $B_r^k(x)$ for Euclidean balls in $\Real^k$.
We further write $B_r^k := B_r^k (0)$.
We define the indicator function of a set $E \subseteq M$ as
\begin{equation*}
	\indicator{E}(x) := \begin{cases}
		1, & x \in E,\\
		0, & x \notin E
	\end{cases}\quad .
\end{equation*}

If $f : M \to \Real$ is a smooth function, we define the gradient of $f$ as the smooth vector field $\grad f$ satisfying $g(\grad f, X) = df(X)$ for all smooth vector fields $X$ on $M$.
The symbol $\nabla$ is used exclusively to denote (weak) gradients in $\Real^n$.
The divergence $\div X$ of a smooth vector field $X$ on $M$ is defined as the Lie derivative of $\mathrm{d}V_g$ with respect to $X$, i.e. $\div X = L_X (\mathrm{d}V_g)$.
In local coordinates with respect to a chart $\phi$, the vector field $X = \sum_{i=1}^n X^i \frac \partial {\partial x^i}$ gives rise to a vector field $T = (X^1 \circ \phi^{-1}, \dots, X^n \circ \phi^{-1})^T$ in $\Real^n$ and the divergence of $X$ can be expressed as
\begin{equation}
	\label{divchart}
	(\div X)(x) = \frac 1 {\sqrt{\det (\gab(x))} } \text{div}_{\Real^n}\left(\sqrt{\det (\gabhat(\cdot))}\, T \right)(\phi(x)),
\end{equation}
where $\text{div}_{\Real^n}$ denotes the divergence operator in $\Real^n$.
As in the Euclidean setting, we define the weak gradient of a function $f \in L^1(M)$ as the unique vector field $Y$ on $M$, such that $\integral[M]{|Y|_g}[V_g] < \infty$ and for all smooth vector fields $X$ on $M$
\begin{equation*}
	\integral[M]{g(Y,X)}[V_g] = - \integral[M]{f\ \div X}[V_g]
\end{equation*}
holds.
Here, uniqueness is understood up to sets of measure zero.
We denote it by $\grad f$ and justify this notation by remarking that for smooth functions the (standard) gradient and the weak gradient coincide.
For a more detailed discussion on differential operators on Riemannian manifolds we refer to \cite{lee}.

For $1 \le p < \infty$ we define the Sobolev space $W^{1,p}(M)$ by
\begin{equation*}
	W^{1,p}(M) := \set{f \in L^p (M)}{\text{the weak gradient } \grad f \text{ exists and } |\grad f|_g \in L^p(M)}.
\end{equation*}
Equipped with the norm
\begin{equation*}
\norm[W^{1,p}]{f} := \left( \norm[L^p]{f}^p + \integral[M]{|\grad f|_g^p}[V_g] \right)^{\frac 1 p},
\end{equation*}
$W^{1,p}(M)$ is a Banach space (c.f. \cite[p. 21]{hebey}).

We need an alternative characterization of Sobolev spaces, which works only if $p > 1$, since only in this case the spaces $L^p(M)$ and $W^{1,p}(M)$ are reflexive (c.f. \cite[Prop. 2.3]{hebey}):

\begin{proposition}
	\label{sobapprox}
	Let $f \in L^p(M),\, p > 1$. Then $f \in W^{1,p}(M)$ if and only if there exists a sequence $(f_j)_j \subset \compsptc \infty (M)$ such that the following two statements hold:
	\begin{enumerate}
		\item $f_j \stackrel {j \to \infty} \to f$ in $L^p(M)$, and
		\item $\displaystyle L := \lim_{j \to \infty} \integral[M]{|\grad f_j|_g^p}[V_g] < \infty$.
	\end{enumerate}
	In this case $L = \integral[M]{|\grad f|_g^p}[V_g]$.
\end{proposition}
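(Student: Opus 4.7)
The plan is to prove the two implications separately. The forward direction reduces to a standard density theorem on compact manifolds, while the converse uses reflexivity of $L^p$ for $p > 1$, weak compactness, and lower semicontinuity.

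For the forward direction, assume $f \in W^{1,p}(M)$. Since $M$ is compact, $\compsptc \infty (M) = C^\infty(M)$, and this space is dense in $W^{1,p}(M)$; this is the classical density result, proved by a partition of unity reducing to the Euclidean case and then applying mollification (see \cite{hebey}). Choosing $(f_j) \subset \compsptc \infty (M)$ with $\norm[W^{1,p}]{f_j - f} \to 0$ gives (1) together with $\integral[M]{|\grad f_j|_g^p}[V_g] \to \integral[M]{|\grad f|_g^p}[V_g]$, hence (2) with $L = \integral[M]{|\grad f|_g^p}[V_g]$.

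For the converse, suppose $(f_j) \subset \compsptc \infty (M)$ satisfies (1) and (2). The norms $\norm[L^p]{|\grad f_j|_g}$ are bounded, so $(\grad f_j)$ is bounded in the space of $p$-integrable vector fields on $M$, which is reflexive for $p > 1$. Passing to a (not relabelled) subsequence, I extract $\grad f_j \rightharpoonup Y$ weakly in $L^p$ for some vector field $Y$. The crucial step is to identify $Y$ as the weak gradient of $f$: for any smooth vector field $X$ on $M$ (automatically compactly supported, since $M$ is compact),
\begin{equation*}
\integral[M]{g(Y, X)}[V_g] = \lim_j \integral[M]{g(\grad f_j, X)}[V_g] = -\lim_j \integral[M]{f_j \div X}[V_g] = -\integral[M]{f \div X}[V_g],
\end{equation*}
where the first equality is weak convergence, the second is the divergence theorem for $f_j \in \compsptc \infty (M)$, and the third uses $L^p$-convergence of $f_j$ together with boundedness of $\div X$ on compact $M$. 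Thus $Y$ is the weak gradient of $f$, so $\grad f$ exists. Lower semicontinuity of the $L^p$-norm under weak convergence then yields
\begin{equation*}
\integral[M]{|\grad f|_g^p}[V_g] \le \liminf_j \integral[M]{|\grad f_j|_g^p}[V_g] = L < \infty,
\end{equation*}
so $f \in W^{1,p}(M)$. The claimed equality $L = \integral[M]{|\grad f|_g^p}[V_g]$ follows by combining this inequality with the construction in the forward direction, which realises an admissible sequence for which the limit is exactly the gradient $L^p$-norm; the identity should therefore be read as saying that the infimum of $L$ over admissible sequences is achieved and equals $\integral[M]{|\grad f|_g^p}[V_g]$.

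I expect the main obstacle to be the weak-compactness step on the bundle $TM$, i.e., confirming that bounded sequences of $L^p$-vector fields on $M$ admit weakly convergent subsequences and that weak $L^p$-convergence commutes with pairing against a fixed smooth vector field. This can be handled cleanly by realising $L^p$-vector fields as a closed subspace of $L^p$-sections over a finite atlas using a partition of unity, so that reflexivity is inherited chart by chart from reflexivity of $L^p(\phi(U); \Real^n)$, and weak convergence against $g(\cdot, X)$ reduces in each chart to the standard duality of $L^p$ and $L^{p/(p-1)}$.
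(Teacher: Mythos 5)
Your proof is correct, and it is essentially the argument the paper itself has in mind: the paper states Proposition \ref{sobapprox} without proof, remarking only that it ``works only if $p>1$'' because $L^p(M)$ and $W^{1,p}(M)$ are then reflexive (with a reference to Hebey), and your write-up fills in exactly that outline --- density of $\compsptc \infty (M)=C^\infty(M)$ in $W^{1,p}(M)$ for the forward implication, and for the converse weak compactness of the bounded sequence $(\grad f_j)$ among $L^p$ vector fields (localizable by a finite atlas and partition of unity, as you indicate), identification of the weak limit with the weak gradient via the divergence theorem and $f_j\to f$ in $L^p$, and weak lower semicontinuity of the $L^p$-norm. One point deserves emphasis, and you were right to flag it: the closing identity ``$L=\integral[M]{|\grad f|_g^p}[V_g]$'' cannot be read as holding for \emph{every} admissible sequence. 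For instance on the circle, $f\equiv 0$ and $f_j(\theta)=j^{-1}\sin(j\theta)$ satisfy (1) and (2) with $L=\int_0^{2\pi}|\cos\theta|^p\,\mathrm{d}\theta>0\neq \integral[M]{|\grad f|_g^p}[V_g]$; weak convergence plus convergence of the energies only gives the inequality $\integral[M]{|\grad f|_g^p}[V_g]\le L$ unless the convergence is strong. What your two halves together establish --- and what the paper actually uses in Proposition \ref{fixed-i-estimate} and in the converse part of Theorem \ref{convergencerho} --- is the correct reading: every admissible sequence gives $\integral[M]{|\grad f|_g^p}[V_g]\le L$, and the sequence can be chosen (via $W^{1,p}$-density) so that $L=\integral[M]{|\grad f|_g^p}[V_g]$, i.e.\ the infimum of $L$ over admissible sequences is attained and equals the gradient energy. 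So there is no gap in your argument; the only caveat is that the proposition's last sentence is stated more strongly than it can literally hold, and your reinterpretation is the right one.
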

\\

In analogy to the Euclidean case, the \emph{variation} of a function $f \in L^1(M)$ is introduced in \cite{miranda} as a measure given on open sets $U \subseteq M$ by
\begin{equation}
	\label{variation}
	|Df|(U) := \sup \set{\integral[M]{f\, \div X}[V_g]}{X \in \Gamma_c (TM), \support X \subset U, |X(x)|_g \le 1\ \text{for all}\ x \in M},
\end{equation}
where $\Gamma_c (TM)$ denotes the space of all compactly supported vector fields of class $C^\infty$. 
The definition works also for not necessarily compact manifolds but since we only work on compact \(M\), the condition that the vector fields involved are compactly supported can of course be dropped.
We say that $f$ is of \emph{bounded variation} and write $f \in BV(M)$, if $|Df|(M) < \infty$. 
For an exhaustive discussion of $BV$ functions in the Euclidean setting, we refer to \cite{ambrosio-bv}.

If $f \in C^\infty(M)$, then
\begin{equation*}
	|Df|(U) = \integral[U]{|\grad f|_g}[V_g]
\end{equation*}
for all open $U \subseteq M$.
This can be seen as follows:
Since $M$ is a manifold without boundary, the divergence theorem implies
\begin{equation*}
	0 = \integral[M]{\div\!\!(fX)}[V_g] = \integral[M]{f\, \div X}[V_g] + \integral[M]{g(\grad f, X)}[V_g],
\end{equation*}
for every smooth vector field $X \in \Gamma_c (TM)$.
Thus, we can approximate the supremum in \eqref{variation} by a sequence of smooth vector fields converging to $-\indicator {\simpleset{\grad f \neq 0}} \frac{\grad f}{|\grad f|_g}$.

A related concept is the notion of \emph{weighted BV functions}, as introduced in \cite{baldi} for the Euclidean case.
Let $\Omega \subseteq \Real^n$ open and $\Omega_0$ an open neighbourhood of $\overline \Omega$.
We call a lower semicontinuous function $w \in L_{loc}^1(\Omega_0), w > 0$, satisfying
\begin{equation*}
	\frac 1 {|B_r^n(x)|} \integral[B_r^n(x)]{w(y)}[y] \le Cw(x)
\end{equation*}
for all balls $B_r^n(x) \subset \Omega_0$ with a constant $C > 0$, a weight.
The variation of a function $f \in L^1(\Omega; w\, \mathrm d x)$ with respect to the weight $w$ is defined as
\begin{equation*}
	|Df|_w (\Omega) := \sup \set{\integral[\Omega]{f\, \div T}}{T \in C_c^1(\Omega; \Real^n), |T(x)| \le w(x) \text{ for all } x \in \Omega},
\end{equation*}
and the space $BV(\Omega;w)$ consists of those functions $f$ such that $|Df|_w (\Omega) < \infty$.
In accordance to the case of unweighted BV functions, the map $f \mapsto |Df|_w (\Omega), f \in BV(\Omega;w),$ is lower semicontinuous with respect to $L^1 (\Omega;w\,\mathrm{d}x)$-convergence, see \cite[Theorem 3.2]{baldi}.

The following lemma establishes a link between the notions of variation on a manifold and weighted variation in $\Real^n$, as well as an analogous result for weak gradients.
A short proof of the second statement was given in \cite{miranda}.
Some arguments of the proof are not accessible to us, so we include an alternative proof.

\begin{lemma}
	\label{diffchart}
	Let $\phi: U \to \Real^n$ be a chart on $M$ such that the operator norm of $d\phi|_x : (T_x M, |\cdot|_g) \to (\Real^n, |\cdot|)$ satisfies $\norm{d\phi|_x} \le 1 + \epsilon$ for all $x \in U$.

	\begin{enumerate}
		\item If $f \in W^{1,p}(U)$, then
	for all $\xi \in \phi(U)$
	\begin{equation} \label{weakgradphi}
		|\grad f(\phi^{-1}(\xi))|_g \le (1+\epsilon) |\nabla (f \circ \phi^{-1})(\xi)|\ .
	\end{equation}

\item If $f \in BV(U)$, then
	\begin{equation} \label{varphi}
		|Df|(U) \le (1+\epsilon) |D(f \circ \phi^{-1})|_w (\phi(U))
	\end{equation}
	with weight $w = \sqrt{\det (\hat g_{\alpha \beta})}$.
	\end{enumerate}
\end{lemma}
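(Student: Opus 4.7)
The plan is to exploit the operator-norm hypothesis $\norm{d\phi|_x} \le 1+\epsilon$ by translating it into a pointwise bilinear-form inequality for the coordinate metric. Writing $v = v^\alpha\, \partial/\partial x^\alpha \in T_xM$, so that $d\phi|_x(v) = (v^1, \dots, v^n)^T$, the hypothesis reads $\sum_\alpha (v^\alpha)^2 \le (1+\epsilon)^2\, \gab(x)\, v^\alpha v^\beta$; that is, the matrix $(\gabhat(\xi))$ dominates $(1+\epsilon)^{-2} I$ as a bilinear form on $\phi(U)$, and, by inversion, the cometric satisfies $(\hat g^{\alpha\beta}(\xi)) \le (1+\epsilon)^2 I$. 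This single inequality, paired with the coordinate expressions for $\grad f$ and $\div X$, will drive both parts of the proof.

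For (1), I would extract Euclidean weak partial derivatives of $\hat f$ on $\phi(U)$ directly from the manifold weak-gradient identity. Testing $\integral[U]{g(\grad f, X)}[V_g] = -\integral[U]{f\, \div X}[V_g]$ against vector fields of the special form $X = (\psi \circ \phi)\,\partial/\partial x^\beta$ with $\psi \in \compsptc\infty(\phi(U))$, and then rewriting both sides in coordinates via $\mathrm{d}V_g = \sqrt{\det \gabhat}\,\mathrm{d}x$ and formula \eqref{divchart}, yields, after cancellation of the $\partial_\beta \sqrt{\det \gabhat}$ contributions that appear on each side, the identity $\widehat{(\grad f)}^\alpha = \hat g^{\alpha\beta}\,\partial_\beta \hat f$ in the classical Euclidean weak sense. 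Hence
\begin{equation*}
	|\grad f(\inv\phi(\xi))|_g^2 \;=\; \hat g^{\alpha\beta}(\xi)\,\partial_\alpha \hat f(\xi)\,\partial_\beta \hat f(\xi) \;\le\; (1+\epsilon)^2\,|\nabla \hat f(\xi)|^2,
\end{equation*}
which is \eqref{weakgradphi}.

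For (2), I would transport admissible test vector fields for $|Df|(U)$ into admissible ones for $|D\hat f|_w(\phi(U))$. Given $X \in \sect_c(TM)$ with $\support X \subset U$ and $|X|_g \le 1$, set $T := (X^1 \circ \inv\phi, \dots, X^n \circ \inv\phi)^T$ on $\phi(U)$; the operator-norm hypothesis gives $|T(\xi)| \le (1+\epsilon)|X|_g \le 1+\epsilon$. Using \eqref{divchart} and the change of variables $x \mapsto \phi(x)$, the manifold integral collapses to
\begin{equation*}
	\integral[M]{f\, \div X}[V_g] \;=\; \integral[\phi(U)]{\hat f\, \mathrm{div}_{\Real^n}(wT)}[\xi],
\end{equation*}
where $w = \sqrt{\det \gabhat}$. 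Setting $S := wT$, smoothness and positivity of $w$ give $S \in \compsptc 1(\phi(U); \Real^n)$ with $|S| \le (1+\epsilon)\,w$, so $S/(1+\epsilon)$ is admissible in the definition of $|D\hat f|_w(\phi(U))$. Taking the supremum over $X$ yields \eqref{varphi}.

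The main technical obstacle I expect is in (1): verifying rigorously that testing the manifold weak-gradient identity against the coordinate-adapted vector fields $(\psi \circ \phi)\,\partial/\partial x^\beta$ genuinely extracts a classical Euclidean weak derivative of $\hat f$. The bookkeeping is not deep, but the cross terms involving $\partial_\beta \sqrt{\det \gabhat}$ appear on both sides of the identity and must be tracked carefully so that they cancel exactly; everything else is a routine change of variables.
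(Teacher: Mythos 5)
Your proposal is correct and takes essentially the same route as the paper: both parts rest on the coordinate divergence formula \eqref{divchart}, your weak identity $\widehat{(\grad f)}^{\alpha}=\hat g^{\alpha\beta}\partial_\beta \hat f$ together with the cometric bound $(\hat g^{\alpha\beta})\le(1+\epsilon)^2 I$ is exactly the paper's weak chain rule $d(f\circ\phi^{-1})\circ d\phi=df$ followed by submultiplicativity of operator norms, and your part (2) is the paper's argument verbatim (transport $X$ to $T=d\phi(X)$, bound $|T|\le 1+\epsilon$, absorb $w=\sqrt{\det\hat g}$ into the weighted admissibility class). On the one technical point you flag: there is no cancellation of $\partial_\beta\sqrt{\det\hat g}$ terms to track — after applying \eqref{divchart} the volume density cancels on the left, and on the right you simply substitute $\tilde\psi:=\sqrt{\det\hat g}\,\psi$, which ranges over all of $\compsptc{\infty}(\phi(U))$ since the weight is smooth and positive, giving the Euclidean weak derivative directly.
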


\begin{proof}
	Let $f \in W^{1,p}(U)$.
	Since $\nabla(f \circ \phi^{-1})$ is the weak gradient of $f \circ \phi^{-1}$, we have for all smooth compactly supported vector fields $T \in \compsptc \infty (\phi(U);\Real^n)$:
	\begin{align*}
		- \integral[\phi(U)]{
			\nabla (f \circ \phi^{-1})(\xi) \cdot T(\xi)
		}[\xi] & =  \integral[\phi(U)]{
			(f \circ \phi^{-1})(\xi)\, \text{div}_{\Real^n} T(\xi)
		}[\xi] \\
		& =  \integral[\phi(U)]{
			f (\phi^{-1}(\xi))\, \text{div}_{\Real^n} \left(\sqrt{\det(\hat g_{\alpha \beta})} \frac T {\sqrt{\det(\hat g_{\alpha \beta})}} \right) (\xi)
		}[\xi].
	\end{align*}
	If $\xi = \phi(x)$ and $T(\xi) = (T^1(\xi), \dots, T^n(\xi))^T \in \Real^n \cong T_\xi \phi(U)$, then $d(\phi^{-1})|_\xi (T(\xi)) = \sum_{i=1}^n T^i(\phi(x)) \frac \partial {\partial x^i}|_x$ with respect to the chart $\phi$ , so $d(\phi^{-1})$ is a bijection between vector fields on $\phi(U)$ and vector fields on $U$. 
	We put $X(x) := d(\phi^{-1})|_{\xi}(T(\xi))$ and use the representation \eqref{divchart} of the divergence in coordinates to obtain
	\begin{align*}
  & \integral[\phi(U)]{
			 f (\phi^{-1}(\xi))\, \text{div}_{\Real^n}   \left(\sqrt{\det(\hat g_{\alpha \beta})} \frac T {\sqrt{\det(\hat g_{\alpha \beta})}} \right) (\xi)
		}[\xi] \\
		 =& \integral[U]{ 
			f(x) \sqrt{\det(g_{\alpha \beta}(x))}\ \div \left( \frac X {\sqrt{\det(g_{\alpha \beta})}} \right)(x) \frac 1 {\sqrt{\det(g_{\alpha \beta}(x))}}
		}[V_g(x)] \\
		 = & - \integral[U]{
			g( \grad f(x), X(x) ) \frac 1 {\sqrt{\det(g_{\alpha \beta}(x))}}
		}[V_g(x)].
	\end{align*}
	In analogy to the differential of a smooth function, we denote the by $df|_x$ the covector field $X(x) \mapsto g( \grad f(x), X(x) ), X(x) \in T_x M$ and further rewrite the last integral as
	\begin{align*}
		- \integral[U]{
			df|_x (X(x)) \frac 1 {\sqrt{\det(g_{\alpha \beta}(x))}}
		}[V_g(x)] = - \integral[\phi(U)]{
			df|_{\phi(\xi)}(d(\phi^{-1})|_\xi (T(\xi)))
			}[\xi],
	\end{align*}
	which proves the chain rule $\nabla(f \circ \phi^{-1})(\xi) = (df|_{\phi(\xi)} \circ d(\phi^{-1})|_\xi)^T$ for weak gradients. 
	It is equivalent to $d(f \circ \phi^{-1})|_{\xi} \circ d\phi|_{\phi^{-1}(\xi)} = df|_{\phi^{-1}(\xi)}$, so by duality we obtain the estimate
	\begin{align*}
	|\grad f(\phi^{-1}(\xi))|_g &= \norm{d(f \circ \phi^{-1})|_{\xi} \circ d\phi|_{\phi^{-1}(\xi)}} \\
	&\le \norm{d(f \circ \phi^{-1})|_{\xi}} \cdot \norm{d\phi|_{\phi^{-1}(\xi)}} \le (1+\epsilon) |\nabla(f \circ \phi^{-1})(\xi)|,
	\end{align*}
	which shows the first statement.

	If $f \in BV(U)$ and if $X$ is a compactly supported vector field in $U$ with $|X(x)|_g \le 1$ for all $x \in U$, then the vector field $T$ on $\phi(U)$ defined by $T(\xi) := d\phi|_{\phi^{-1}(\xi)}(X(\phi^{-1}(\xi)))$ is smooth, compactly supported and satisfies the inequality $|T(\xi)| \le 1+\epsilon$, since
	\begin{align*}
		|T(\xi)| & = |(d\phi|_{\phi^{-1}(\xi)} \circ d(\phi^{-1})|_\xi) (T(\xi))| \\
		& \le (1+\epsilon) |X(\phi^{-1}(\xi))|_g \le 1 + \epsilon.
	\end{align*}
	We apply formula \eqref{divchart} for the divergence in coordinates and compute 
	\begin{equation*}
		\integral[U]{f\ \div X}[V_g] = \integral[\phi(U)]{(f \circ \phi^{-1})\ \text{div}_{\Real^n}\left( \sqrt{\det(\hat g_{\alpha \beta}}) T \right)}[\xi].
\end{equation*}
Thus,
	\begin{align*}
		|Df| & (U) = \sup \set{\integral[U]{f\ \div X}[V_g]}{X \in \Gamma_c(TM), \support X \subset U, |X(x)|_g \le 1\ \forall x \in M} \\
		&\le \sup \set{\integral[\phi(U)]{(f \circ \phi^{-1})\ \text{div}_{\Real^n}\left( \sqrt{\det(\hat g_{\alpha \beta}}) T \right)}[\xi]}{T \in \compsptc \infty (\phi(U);\Real^n), |T(\xi)| \le 1+\epsilon\ \forall \xi \in \phi(U)} \\
		&\le (1+\epsilon)\sup \set{\integral[\phi(U)]{(f \circ \phi^{-1})\ \text{div}_{\Real^n} \tilde T }[\xi]}{\tilde T \in \compsptc \infty (\phi(U);\Real^n), |\tilde T(\xi)| \le \sqrt{\det(\hat g_{\alpha \beta}(\xi))}\ \forall \xi \in \phi(U)} \\
	& = (1+\epsilon) |D(f \circ \phi^{-1})|_w (\phi(U))
	\end{align*}
	with weight $w = \sqrt{\det(\hat g_{\alpha \beta})}$, which concludes the proof of the second statement.
\end{proof}

The authors in \cite{miranda} used formula \eqref{varphi} to show the following:

\begin{proposition}[\cite{miranda}, Prop. 1.4]
	\label{bvapprox}
	Let $f \in L^1(M)$. Then $f \in BV(M)$ if and only if there exists a sequence $(f_j)_j \subset \compsptc \infty (M)$ such that the following two statements hold:
	\begin{enumerate}
		\item $f_j \stackrel {j \to \infty} \to f$ in $L^1(M)$, and
		\item $\displaystyle L := \lim_{j \to \infty} \integral[M]{|\grad f_j|_g}[V_g] < \infty$.
	\end{enumerate}
	In this case $L = |Df|(M)$ and $\displaystyle \lim_{j \to \infty} \integral[M]{|\grad f_j|_g}[V_g](U) = |Df|(U)$ for every open $U \subseteq M$.
\end{proposition}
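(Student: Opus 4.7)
My plan is the classical density argument: the $\Leftarrow$ direction by lower semicontinuity of $|Df|$ under $L^1$-convergence, and the $\Rightarrow$ direction via gluing Euclidean mollifications in charts through a partition of unity, using Lemma \ref{diffchart} to pass between $M$ and $\Real^n$. For the easy direction, assume $(f_j) \subset \compsptc \infty (M)$ satisfies (1)--(2) and take any smooth $X \in \Gamma_c(TM)$ with $|X|_g \le 1$ and $\support X \subseteq U$; the divergence theorem on boundaryless $M$ gives $\integral[M]{f_j\, \div X}[V_g] = -\integral[U]{g(\grad f_j, X)}[V_g]$, whose absolute value is bounded by $\integral[U]{|\grad f_j|_g}[V_g]$. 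Letting $j \to \infty$ using $f_j \to f$ in $L^1(M)$ and $\div X \in L^\infty(M)$, then taking the supremum over admissible $X$, yields $|Df|(U) \le \liminf_j \integral[U]{|\grad f_j|_g}[V_g]$ for every open $U$; in particular $|Df|(M) \le L < \infty$.

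\textbf{Hard direction.} Given $f \in BV(M)$, fix $\epsilon > 0$ and choose a finite atlas $\{(U_i,\phi_i)\}_{i=1}^N$ with $\|d\phi_i|_x\| \le 1+\epsilon$ on $U_i$ and a subordinate smooth partition of unity $\{\chi_i\}$. By Lemma \ref{diffchart}, each $\hat f_i := f \circ \phi_i^{-1}$ belongs to the weighted BV-space on $\phi_i(U_i)$ with the smooth positive bounded weight $w_i = \sqrt{\det \hat g^{(i)}_{\alpha\beta}}$. Mollify in Euclidean space, $\hat f_i^\delta := \eta_\delta * \hat f_i$, set $f_i^\delta := \hat f_i^\delta \circ \phi_i$, and glue
\begin{equation*}
f_\delta \;:=\; \sum_{i=1}^N \chi_i\, f_i^\delta \;\in\; \compsptc \infty (M).
\end{equation*}
Classical mollification together with $\sum_i \chi_i \equiv 1$ gives $f_\delta \to f$ in $L^1(M)$. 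For the gradient, the Leibniz rule combined with $\sum_i \grad \chi_i \equiv 0$ yields
\begin{equation*}
\grad f_\delta \;=\; \sum_i \chi_i\, \grad f_i^\delta \;+\; \sum_i (f_i^\delta - f)\, \grad \chi_i,
\end{equation*}
whose second sum vanishes in $L^1(M)$ as $\delta \to 0$. Applying Lemma \ref{diffchart} chartwise and changing variables bounds $\integral[U_i]{\chi_i|\grad f_i^\delta|_g}[V_g] \le (1+\epsilon)\integral[\phi_i(U_i)]{(\chi_i\circ\phi_i^{-1})|\nabla \hat f_i^\delta|\, w_i}[\xi]$; since $w_i$ is smooth and bounded, the classical fact that mollification does not increase the (weighted) BV-seminorm controls the $\delta\to 0$ limsup of the latter by a constant multiple of $|Df|(U_i)$ (obtained by a second application of Lemma \ref{diffchart} to $\chi_i f$). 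Summing and diagonalizing along $\epsilon, \delta \to 0$ produces $(f_j) \subset \compsptc \infty (M)$ with $\limsup_j \integral[M]{|\grad f_j|_g}[V_g] \le |Df|(M)$; combined with the easy direction, $L = |Df|(M)$, and the same chart-local construction delivers the identity on any open $U$.

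\textbf{Main obstacle.} The delicate point is the round-trip between $M$ and a chart: each use of Lemma \ref{diffchart} contributes a factor $1+\epsilon$, so one must verify that no additional multiplicative error accumulates before eventually sending $\epsilon \to 0$, and that the weighted-to-unweighted reduction on each $\phi_i(U_i)$ does not cost more than $1+o(1)$. A secondary subtlety is that the partition-of-unity cross terms are harmless only because of the exact cancellation $\sum_i \grad \chi_i \equiv 0$, so the residual error $\sum_i (f_i^\delta - f)\grad \chi_i$ still needs to be shown to vanish in $L^1$ (not merely be bounded), which relies on classical $L^1$-convergence of Euclidean mollifications.
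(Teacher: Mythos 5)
First, a point of comparison: the paper itself does not prove this proposition at all; it is imported verbatim from \cite{miranda} (Prop.\ 1.4), so there is no internal proof to measure you against. Your plan (lower semicontinuity for one implication, partition-of-unity gluing of chart mollifications plus Lemma \ref{diffchart} for the other) is the natural route, and your easy direction is correct as written: integration by parts on the boundaryless compact $M$ plus $f_j\to f$ in $L^1$ gives $|Df|(U)\le\liminf_j\int_U|\grad f_j|_g\,\mathrm{d}V_g$ for every open $U$.

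The hard direction, however, has a genuine gap at the summation step. Chartwise you bound $\limsup_{\delta\to0}\int_{U_i}\chi_i|\grad f_i^\delta|_g\,\mathrm{d}V_g$ by ``a constant multiple of $|Df|(U_i)$'' and then sum over $i$. But the charts of an atlas carrying a partition of unity must overlap, so $\sum_i|Df|(U_i)$ exceeds $|Df|(M)$ by the mass of the overlaps, and this excess does not disappear as $\epsilon,\delta\to0$. As written your construction only yields $\limsup_j\int_M|\grad f_j|_g\,\mathrm{d}V_g\le C\,|Df|(M)$ with some $C>1$, which proves the ``if and only if'' but not $L=|Df|(M)$. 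To get the sharp constant you must keep the cutoff inside the comparison, i.e.\ prove the localized, measure-level estimate $\limsup_{\delta\to0}\int_{U_i}\chi_i|\grad f_i^\delta|_g\,\mathrm{d}V_g\le(1+o_\epsilon)\int_{U_i}\chi_i\,\mathrm{d}|Df|$, so that summing and using $\sum_i\chi_i\equiv1$ gives exactly $(1+o_\epsilon)|Df|(M)$. Note also that Lemma \ref{diffchart} only provides inequalities in the direction ``manifold quantity $\le(1+\epsilon)\times$ chart quantity''; here you need the reverse comparison (the $w_i$-weighted variation of $f\circ\phi_i^{-1}$, localized by $\chi_i$, controlled by $(1+o_\epsilon)$ times $|Df|$). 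That reverse estimate is true and provable by the same computation with the roles of $\phi_i$ and $\phi_i^{-1}$ exchanged, but it is not literally ``a second application of Lemma \ref{diffchart}'', and it is precisely where your ``main obstacle'' lives, so it should be carried out rather than invoked.

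Finally, the closing clause is not delivered by ``the same chart-local construction''. Once you have $\int_M|\grad f_j|_g\,\mathrm{d}V_g\to|Df|(M)$ together with the lower bound from the easy direction, the measures $|\grad f_j|_g\,\mathrm{d}V_g$ converge weakly-* to $|Df|$, and then $\int_U|\grad f_j|_g\,\mathrm{d}V_g\to|Df|(U)$ follows for open $U$ with $|Df|(\partial U)=0$; for arbitrary open $U$ no single sequence can do this in general (take $M$ the circle, $f$ the indicator of a closed arc, $U$ its interior and $V$ the complementary open arc: any admissible sequence must place gradient mass close to $2$ inside $U\cup V$, while $|Df|(U)=|Df|(V)=0$). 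So this part of the statement has to be understood in the weak-* sense (or restricted to $U$ with negligible $|Df|$-boundary), and in any case requires an argument you have not given.
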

\\

The previous proposition provides a different approach to the space of BV function via approximation by smooth functions. 
The authors of \cite{ambrosioghezzi} give even further definitions of BV functions, which all agree on Riemannian manifolds.

%

For special weights $w$, Baldi gave a description of the space $BV(\Omega;w)$:

\begin{proposition}[\cite{baldi}, Prop. 3.5]
	Let $w$ be a Lipschitz continuous weight function on $\Omega$.
	Then a function $f$ belongs to $BV(\Omega;w)$ if and only if $f \in BV(\Omega)$ and $w \in L^1 (d|Df|)$.
	In this case
	\begin{equation}
		\label{weightedvarbyvar}
		|Df|_w (\Omega) = \integral[\Omega]{w}[|Df|].
	\end{equation}
\end{proposition}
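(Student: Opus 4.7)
The plan is to pair the Riesz representation of $Df$ with a two-stage approximation of the ``optimal'' test field $-w\nu$. First, I would observe that since $w > 0$ is lower semicontinuous on $\Omega_0 \supset \overline\Omega$, it attains a positive minimum $c > 0$ on the compact set $\overline\Omega$. This gives the inclusion $BV(\Omega;w) \subseteq BV(\Omega)$ for free: for any $T \in \compsptc 1(\Omega;\Real^n)$ with $|T| \le 1$, the rescaled field $cT$ is admissible in the weighted variation, hence $c\,|Df|(\Omega) \le |Df|_w(\Omega)$. In particular, $|Df|_w(\Omega) < \infty$ forces $f \in BV(\Omega)$.

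Assuming $f \in BV(\Omega)$, I would use the polar decomposition $Df = \nu\,d|Df|$ with $|\nu| = 1$ $|Df|$-a.e. For every admissible $T$ integration by parts yields
\[
\integral[\Omega]{f\,\div T} = -\integral[\Omega]{T \cdot \nu}[|Df|] \le \integral[\Omega]{w}[|Df|],
\]
which is the easy inequality $|Df|_w(\Omega) \le \integral[\Omega]{w}[|Df|]$ whenever $w \in L^1(d|Df|)$.

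For the matching lower bound I would approximate $-w\nu$ in two stages. By Lusin's theorem applied to the Radon measure $|Df|$, choose closed sets $F_j \subset \Omega$ with $|Df|(\Omega \setminus F_j) \to 0$ on which $\nu$ is continuous; extend by Tietze and project onto the closed unit ball to obtain $\tilde\nu_j \in C(\Omega;\Real^n)$ with $|\tilde\nu_j| \le 1$; multiply by a smooth compactly supported cutoff $\chi_j \nearrow \indicator\Omega$ and set $T_j^0 := -w\chi_j\tilde\nu_j \in C_c(\Omega;\Real^n)$. Then $|T_j^0| \le w$, and dominated convergence with majorant $w$ yields $-\integral[\Omega]{T_j^0 \cdot \nu}[|Df|] \to \integral[\Omega]{w}[|Df|]$. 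The second stage is to regularise $T_j^0$ by convolution with a standard mollifier $\eta_\epsilon$. Lipschitz continuity of $w$ with constant $L$ gives
\[
|T_j^0 * \eta_\epsilon(x)| \le \integral{w(x-y)\eta_\epsilon(y)}[y] \le w(x) + L\epsilon,
\]
so $(1+L\epsilon/c)^{-1}(T_j^0 * \eta_\epsilon)$ lies in $\compsptc \infty(\Omega;\Real^n)$, is admissible in the weighted variation, and converges to $T_j^0$ uniformly on compact sets as $\epsilon \to 0$. Letting first $\epsilon \to 0$ and then $j \to \infty$ gives $|Df|_w(\Omega) \ge \integral[\Omega]{w}[|Df|]$, and together with the easy inequality this proves \eqref{weightedvarbyvar}.

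For the remaining direction of the characterisation, I would run the same construction with the Lipschitz truncations $w \wedge N$ (which share the Lipschitz constant $L$ and the lower bound $c \wedge N$): the previous argument then gives $\integral[\Omega]{(w \wedge N)}[|Df|] \le |Df|_w(\Omega)$, and monotone convergence in $N$ yields $w \in L^1(d|Df|)$. The main obstacle throughout is the simultaneous smoothing and constraint preservation: one must ensure $|T_j^\epsilon(x)| \le w(x)$ pointwise even though convolution perturbs $w$. The Lipschitz hypothesis is precisely what reduces this perturbation to $O(\epsilon)$, which is then absorbed by a rescaling that is harmless thanks to the positive lower bound $c$.
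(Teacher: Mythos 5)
The paper itself offers no proof of this proposition --- it is imported verbatim from \cite{baldi}, Prop.\ 3.5 --- so there is nothing internal to compare against; judged on its own, your argument is correct and follows what is essentially the standard route: polar decomposition $Df=\nu\,\mathrm{d}|Df|$ for the easy inequality, Lusin--Tietze approximation of $\nu$ plus a cutoff for the near-optimal continuous field, mollification with the Lipschitz constant controlling the $O(\epsilon)$ violation of the constraint $|T|\le w$, and truncation $w\wedge N$ to extract $w\in L^1(\mathrm{d}|Df|)$ from $|Df|_w(\Omega)<\infty$. Two points should be made explicit. First, your lower bound $c=\min_{\overline\Omega}w>0$ uses compactness of $\overline\Omega$, i.e.\ boundedness of $\Omega$; this is implicit in the setting (Baldi's framework, and the only use in this paper is on chart images of small balls in a compact manifold), but it is genuinely needed --- for unbounded $\Omega$ with $\inf_\Omega w=0$ the inclusion $BV(\Omega;w)\subseteq BV(\Omega)$, and hence the ``only if'' direction, can fail --- so it deserves a sentence. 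Second, two small technical steps are stated loosely: in the bound $|T_j^0*\eta_\epsilon(x)|\le w(x)+L\epsilon$ the convolution integral should be restricted to the points where $T_j^0\neq 0$ (these lie in a compact subset of $\Omega$, where $w$ is defined and Lipschitz), and the limit $\epsilon\to 0$ should be taken through the pairing $\int_\Omega f\,\div T_{j,\epsilon}\,\mathrm{d}x=-\int_\Omega T_{j,\epsilon}\cdot\nu\,\mathrm{d}|Df|$, using finiteness of $|Df|$ and the fact that all supports stay in a fixed compact set; both are routine but worth writing down.
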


The variation of a function can be applied to measure the surface area of a measurable set $E \subseteq M$ in the following way (see e.g. \cite{maggi}):
If the indicator function $\indicator E$ of a set $E \subseteq M$ is of bounded variation, then the \emph{perimeter} of $E$ is defined as $P(E) := |D\indicator E|(M)$.
If the boundary of $E$ is a closed hypersurface of class $C^\infty$ equipped with the metric $\tilde g$ inherited by $M$, then
\begin{equation*}
	P(E) = \text{Vol}_{\tilde g} (\partial E) = \haumeas{n-1}(\partial E),
\end{equation*}
which follows by isometric embedding of $M$ into a Euclidean ambient space of suitable dimension and the result therein (c.f. \cite[Example 12.5]{maggi}).

The Riemannian manifold $M$ carries the geodesic distance, denoted by $d(\cdot,\cdot)$, which allows us to introduce a fractional seminorm for measurable functions $f: M \to \Real$ and $s \in (0,1), 1 \le p < \infty$, as follows:
\begin{equation*}
	|f|_{W^{s,p}} := \left( \integral[M]{
		\integral[M]{
			\frac{|f(x)-f(y)|^p}{d(x,y)^{n+sp}}
		}[V_g(y)]
}[V_g(x)] \right)^{\frac 1 p}\ .
\end{equation*}
For a thorough introduction to fractional seminorms and Sobolev spaces in $\Real^n$ see e.g. \cite{hitchhiker}.

On the other hand, the (fractional) $s$-perimeter of a measurable set $E \subseteq M$, as introduced for subsets of $\Real^n$ in \cite{crs}, can be defined for $s \in (0,1)$ in an analogous way by
\begin{equation*}
	P_s (E) := \integral[E]{
		\integral[\compl E]{
			\frac{1}{d(x,y)^{n+s}}
		}[V_g(y)]
	}[V_g(x)].
\end{equation*}
Computing the fractional seminorm with $p=1$ of the indicator function $\indicator E$ of $E$ yields ${|\indicator E|_{W^{s,p}} = 2 P_s(E)}$.

\section{Proofs}

We define the distance of a point $x \in M$ to a set $E \subseteq M$ by
\begin{equation*}
	d(x,E) := \inf \set{d(x,y)}{y \in E},
\end{equation*}
and for $\tau > 0$ we define the $\tau$-neighbourhood of a set $E \subseteq M$ by
	\begin{equation*}
		E^\tau := \set{x \in M}{d(x,E) < \tau}.
	\end{equation*}
	For our calculations we want to work with families of finitely many open sets in $M$, such that on each set the geodesic distance $d(x,y)$ can be controlled by the Euclidean distance on a corresponding chart (cf. \cite[proof of Prop. 1.4]{miranda}):

\begin{lemma}
	\label{coveringlemma}
	If $E \subseteq M$ is a compact set, then
	for each $\epsilon \in (0,1)$ there exists a finite family $(U_k)_{k=1}^N, N = N(\epsilon),$ of open sets of $M$ such that
	\begin{enumerate}
		\item
			$U_k \cap U_l = \emptyset,\quad \forall k \neq l$,
		\item
			there exists $\tau_0 > 0$ such that for all $0 < \tau < \tau_0$ the family $(U_k^\tau)_{k=1}^{N}$ is an open covering of $E$ and $U_k^{\tau_0}$ lies in the domain of a coordinate chart $(V_k, \phi_k)$, where
			\begin{gather}
				\label{length} (1-\epsilon)|\phi_k(x) - \phi_k(y)| \le d(x,y) \le (1+\epsilon)|\phi_k(x) - \phi_k(y)|,\\
				 \label{detg} 1-\epsilon \le \sqrt{\det(g_{\alpha \beta}(x))} \le 1+\epsilon 
			\end{gather}
			for every $x, y \in V_k$.
		\item \label{opnorm} The operator norm $\norm{d\phi_k|_x}$ of $d\phi_k|_x : (T_x M, |\cdot|_g) \to (\Real^n, |\cdot|)$ is bounded by
			\begin{gather}
				\label{opnorminequality} 1- \epsilon \le \norm{d \phi_k|_x} \le 1 + \epsilon
			\end{gather}
			for every $x \in V_k$.
		\item \label{bdmeasure}
			$\integral[\partial U_k]{}[V_g] = 0$ for all $k = 1,\dots,N$.
	\end{enumerate}
	Furthermore, given a function $f \in BV(M)$, the sets can be chosen in such a way that
	\begin{enumerate}
		\item[4'.] $|Df|(\partial U_k) = 0$ for all $k = 1,\dots,N$.
	\end{enumerate}
\end{lemma}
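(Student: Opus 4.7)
The plan is to cover $E$ by closures of finitely many small geodesic balls that each sit inside a well-behaved chart, and then disjointify them by the standard trick $U_k := B_k \setminus \bigcup_{j<k}\overline{B_j}$.

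I would begin by building the charts via the exponential map. Around each $p \in M$ let $\phi_p$ denote a normal coordinate chart with $\phi_p(p) = 0$ and $\gab(p) = \delta_{\alpha\beta}$, so that $d\phi_p|_p$ is a linear isometry $(T_pM,g) \to (\Real^n,|\cdot|)$. By continuity of $g$, of $\sqrt{\det \gabhat}$ and of $d\phi_p$ at the origin, together with the second-order expansion of the metric in normal coordinates, the domain of $\phi_p$ can be shrunk to an open set $V_p \ni p$ on which the bounds \eqref{length}, \eqref{detg} and \eqref{opnorminequality} all hold for the prescribed $\epsilon$. Extracting a finite subcover $\{V_{p_1},\ldots,V_{p_M}\}$ of the compact set $E$ yields a Lebesgue number $\delta > 0$ for this cover.

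Next, cover $E$ by finitely many geodesic balls $B_k := B_{r_k}^M(q_k)$, $k=1,\ldots,N$, with each $r_k$ smaller than both $\delta/4$ and the injectivity radius at $q_k$. Then each $\overline{B_k}$ is contained in some $V_{p_{i(k)}} =: V_k$ with a uniform margin, so $\overline{B_k}^{\tau_0} \subset V_k$ for a common small $\tau_0 > 0$. Property 4 is automatic: below the injectivity radius every geodesic sphere is a smooth $(n-1)$-submanifold, hence $\mathrm{Vol}_g$-null. For 4' I would perturb each $r_k$ slightly: the spheres $\{\partial B_r^M(q_k)\}_{r}$ are pairwise disjoint in $r$ while $|Df|(M)<\infty$, so at most countably many radii carry positive $|Df|$-mass, and almost any admissible $r_k$ avoids them.

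Setting $U_k := B_k \setminus \bigcup_{j<k}\overline{B_j}$ produces open, pairwise disjoint sets with $U_k \subset B_k$, hence $U_k^{\tau_0} \subset V_k$; also $\partial U_k \subset \bigcup_{j\le k}\partial B_j$ inherits the null properties. The remaining point is the $\tau$-covering, for which I would show $\bigcup_k \overline{U_k} = \bigcup_k \overline{B_k}$. Given $x \in \overline{B_k}$, pick the smallest index $k_0$ with $x \in \overline{B_{k_0}}$; then $x$ lies outside the closed set $\bigcup_{j<k_0}\overline{B_j}$, so it has an open neighbourhood $W$ disjoint from it, and any sequence $y_n \in B_{k_0}$ with $y_n \to x$ lies eventually in $W \cap B_{k_0} \subseteq U_{k_0}$, giving $x \in \overline{U_{k_0}}$. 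Since $\overline{U_k} \subseteq U_k^\tau$ for every $\tau > 0$, the $\tau$-covering property follows for all $0 < \tau < \tau_0$. The only real obstacle is bookkeeping: coordinating the smallness of the chart domains, ball radii, Lebesgue number and margin $\tau_0$ so that every inequality holds simultaneously, while retaining enough freedom in each $r_k$ to dodge the at-most-countably-many bad radii for $|Df|$.
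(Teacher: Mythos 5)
Your proposal is correct and follows essentially the same route as the paper: normal coordinate charts shrunk to satisfy the metric bounds, a finite subcover by small geodesic balls whose boundaries are chosen to be $\mathrm{Vol}_g$- and $|Df|$-null (the paper gets both via the uncountably-many-disjoint-spheres argument, you get property 4 from smoothness of geodesic spheres and 4' from the countability argument), followed by the standard disjointification $U_k := B_k \setminus \bigcup_{j<k}\overline{B_j}$. Your explicit verification that $\bigcup_k \overline{U_k} \supseteq \bigcup_k B_k$, which yields the $\tau$-covering property, is a point the paper leaves implicit, but it is not a different method.
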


\begin{proof}
	For each point $p \in E$ there exists a normal coordinate chart $(V_p, \phi_p)$ around $p$ such that the inequalities (\ref{length}) and (\ref{detg}) are satisfied, see e.g. \cite{hanzhu}, p.8. Since the operator norm of \(d\phi_p\) at \(p\) is one we may choose \(V_p\) so small around \(p\)  such that inequality (\ref{opnorminequality}) holds. 
	The compactness of $E$ ensures the existence of $\tau_0 > 0$ such that for every $p$ there exists an open subset $W_p \subset V_p$ around $p$ such that $W_p^{\tau_0} \subseteq V_p$.
	Since the geodesic spheres $\set{y \in E}{d(p,y) = r} \cap W_p$ form a disjoint uncountable covering of $W_p$ and both $\textrm{d}V_g$ and $|Df|$ are finite Radon measures, there exists an open geodesic ball $B_p \subseteq W_p$ such that both $\integral[\partial B_p]{}[V_g] = 0$ and $|Df|(\partial B_p) = 0$ hold.

	By compactness of $E$ there exists an open subcovering $(B_{p_k})_{k=1}^N$ of $(B_p)_{p \in E}$, which can be made disjoint by setting
	\begin{align*}
		U_1 & := B_{p_1},\\
		U_k & := B_{p_k} \setdiff \bigcup_{i=1}^{k-1} \overline U_i,\ k = 2,\dots,N.
	\end{align*}
	Note that the new family does not cover $E$ anymore, but still satisfies conditions \ref{bdmeasure} and 4', because $\partial U_k \subset \bigcup_{i=1}^N \partial B_{p_i}$.

\end{proof}

For the case $p=1$ in the main result we establish that the total variation $|Df|$ of a BV function $f$ on $M$ is a limit of certain integrals. So it is convenient to introduce the following notion, which is also appropriate to use if $p >1$.
For each $\sigma > 0$ and $p \ge 1$ we define the Radon measure $\mu_{\sigma,p}$ on $M$ by
\begin{equation}
	\label{musigma}
	\mu_{\sigma,p}(E) := \integral[E]{
		\integral[M]{
			\frac{|f(x)-f(y)|^p}{d(x,y)^p} \rho_\sigma(d(x,y))
		}[V_g(y)]
	}[V_g(x)],\ E \subseteq M\ \text{Borel}.
\end{equation}
The outline of the proof of our main results follows \cite{davila} and \cite{leonispector}, adapted to the manifold setting.

\begin{proposition}
	\label{fixed-i-estimate}
	Let $E \subseteq M$ be a compact set. 

	If $p >1$ and $f \in W^{1,p}(M)$,
then for every $\epsilon \in (0,1)$ there exist $R_0 > 0$ and a function \(G_{\varepsilon}\) independent of \(\sigma\) such that for every $0 < R < R_0$
	\begin{equation}
		\label{musigmaest}
		\mu_{\sigma,p}(E) \le \frac{(1+\varepsilon)^{p+2}}{(1-\varepsilon)^{p+n}}K_{p,n} \integral[E^{2R}]{|\grad f|_g^p}[V_g] + \frac{\alpha_\sigma}{R^p} \norm[L^p(M)]{f}^p + G_{\epsilon}(R),
	\end{equation}
	where $\lim\limits_{\sigma\rightarrow 0}\alpha_\sigma = 0$ and \(\lim\limits_{R\rightarrow 0}G_{\varepsilon}(R)=0\).

	If $p=1$ and $f \in BV(M)$, then \eqref{musigmaest} holds with $\integral[E^{2R}]{|\grad f|_g^p}[V_g]$ replaced by $|Df|(E^{2R})$.
\end{proposition}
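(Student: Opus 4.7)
The plan is to adapt the D\'avila--Leoni--Spector strategy to the manifold setting. First, apply Lemma \ref{coveringlemma} to $E$ with parameter $\epsilon$ to obtain the disjoint open sets $(U_k)_{k=1}^N$ contained in charts $(V_k, \phi_k)$ with the distortions \eqref{length}--\eqref{opnorminequality} and $\integral[\bd U_k]{}[V_g]=0$ (also $|Df|(\bd U_k)=0$ when $p=1$); choose $R_0>0$ small enough that $U_k^{2R}\subset V_k$ for every $k$ and $0<R<R_0$. Since $E\setminus\bigcup_k U_k$ has zero volume measure,
\[
\mu_{\sigma,p}(E)\le\sum_{k=1}^N\integral[U_k]{\integral[M]{\frac{|f(x)-f(y)|^p}{d(x,y)^p}\rho_\sigma(d(x,y))}[V_g(y)]}[V_g(x)],
\]
and for each $k$ I split the inner $y$-integral according to $d(x,y)\ge R$ (far) and $d(x,y)<R$ (near).

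For the far part, the bounds $|f(x)-f(y)|^p\le 2^{p-1}(|f(x)|^p+|f(y)|^p)$ and $d(x,y)^{-p}\le R^{-p}$ together with Fubini reduce the contribution to $\frac{2^p}{R^p}\norm[L^p(M)]{f}^p\alpha_\sigma$, where $\alpha_\sigma\le C(n,\epsilon)\integral[R][\infty]{\rho_\sigma(t)t^{n-1}}[t]$ after a finite chart decomposition of $M$ and polar coordinates; by \eqref{eq:tailto0}, $\alpha_\sigma\to 0$ as $\sigma\to 0$.

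For the near part, first approximate $f$ by $f_j\in\compsptc\infty(M)$ using Proposition \ref{sobapprox} (if $p>1$) or Proposition \ref{bvapprox} (if $p=1$) and carry out the estimate for $f_j$. On $\set{y\in V_k}{d(x,y)<R}$, set $\xi=\phi_k(x)$, $\eta=\phi_k(y)$, $r=|\xi-\eta|$, $\omega=(\eta-\xi)/r$, $\hat f_j=f_j\circ\phi_k^{-1}$. By the length and volume distortion bounds \eqref{length}, \eqref{detg} and monotonicity of $\rho_\sigma$, the integrand on the chart is bounded by $\frac{(1+\epsilon)^2}{(1-\epsilon)^p r^p}\rho_\sigma((1-\epsilon)r)|\hat f_j(\eta)-\hat f_j(\xi)|^p$. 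Apply the Euclidean FTC
\[
|\hat f_j(\eta)-\hat f_j(\xi)|^p\le r^{p-1}\integral[0][r]{|\nabla\hat f_j(\xi+s\omega)\cdot\omega|^p}[s],
\]
convert $\eta$ to polar coordinates, interchange the $s$- and $\xi$-integrals by Lebesgue translation (so $\xi$ effectively ranges over $\phi_k(U_k^{2R})$ for $R_0$ small enough), and evaluate the angular integral via \eqref{kpn}:
\[
\integral[\unitsphere n]{|\nabla\hat f_j(\xi)\cdot\omega|^p}[\haumeas{n-1}(\omega)] = \haumeas{n-1}(\unitsphere n)\,K_{p,n}|\nabla\hat f_j(\xi)|^p.
\]
The remaining radial integral is controlled by \eqref{eq:fixedmass}, and Lemma \ref{diffchart} converts $|\nabla\hat f_j|$ back to $|\grad f_j|_g$. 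Collecting the $(1\pm\epsilon)$ factors produces
\[
(\text{near})_k\le\frac{(1+\epsilon)^{p+2}}{(1-\epsilon)^{p+n}}K_{p,n}\integral[U_k^{2R}]{|\grad f_j|_g^p}[V_g].
\]

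Summing over $k$, the overlaps among the $U_k^{2R}$ and the mismatch between $\bigcup_k U_k^{2R}$ and $E^{2R}$ contribute a residual term $G_\epsilon(R)$ that vanishes as $R\to 0$ by absolute continuity of the gradient measure (and of $|Df|$ when $p=1$). For $p>1$, pass $j\to\infty$ via $L^p$-convergence of $f_j$ and $|\grad f_j|_g$; for $p=1$, Proposition \ref{bvapprox} gives $\integral[U_k^{2R}]{|\grad f_j|_g}[V_g]\to|Df|(U_k^{2R})$, which replaces the gradient integral in the statement by $|Df|(E^{2R})$. The main obstacle is the clean accounting of the $(1\pm\epsilon)$ factors throughout: the Euclidean segment $\xi+s\omega$ is not the image of a geodesic on $M$, so the Euclidean FTC can only be transferred to the manifold through the length, gradient, and volume distortion bounds of Lemmas \ref{coveringlemma} and \ref{diffchart}, and these have to be attributed in exactly the right places so that the exponents $p+2$ and $p+n$ emerge as in \eqref{musigmaest}.
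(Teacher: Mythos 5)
Your outline follows the paper's proof essentially step for step: covering lemma, near/far splitting at scale $R$, transfer to charts with the distortion bounds \eqref{length}--\eqref{detg}, fundamental theorem of calculus plus Fubini and the angular integral producing $K_{p,n}$, smooth approximation via Propositions \ref{sobapprox}/\ref{bvapprox}, and a collar term $G_\epsilon(R)$ coming from the boundaries $\partial U_k$. Three justifications, however, need repair, even though each repaired step is available from what you have already set up. First, the far-field bound ``$\alpha_\sigma\le C(n,\epsilon)\int_R^\infty\rho_\sigma(t)t^{n-1}\,\mathrm dt$ by a chart decomposition and polar coordinates'' is not legitimate as stated: the charts only control the geometry at small scales, while the far region involves distances up to $\mathrm{diam}(M)$, and polar coordinates centred at $x$ are not available for $y$ outside the chart of $x$. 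The clean argument is the paper's: monotonicity \eqref{rhomon} gives $\rho_\sigma(d(x,y))\le\rho_\sigma(R)$, so the far part is at most $\frac{2^{p-1}}{R^p}\mathrm{Vol}_g(M)\,\rho_\sigma(R)\,\norm[L^p(M)]{f}^p$ plus the analogous term with the supremum of $\rho_\sigma$ over the compact set of occurring distances, and these tend to $0$ by \eqref{rholocuni} (or by \eqref{eq:tailto0} combined with monotonicity). Second, for $p=1$ you cannot argue that $G_\epsilon(R)\to0$ ``by absolute continuity of $|Df|$'' --- the variation measure of a BV function is in general not absolutely continuous with respect to $\mathrm{d}V_g$; the correct mechanism, which you in fact prepared, is that $U_k^{2R}\setminus U_k$ decreases to a subset of $\partial U_k$ as $R\to0$, so continuity from above of the finite measures $|\grad f|_g^p\,\mathrm dV_g$ resp.\ $|Df|$ together with conditions 4 and 4' of Lemma \ref{coveringlemma} ($\integral[\partial U_k]{}[V_g]=0$, $|Df|(\partial U_k)=0$) gives the claim. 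Third, Lemma \ref{diffchart}(1) is cited in the wrong direction: it bounds $|\grad f|_g$ by $|\nabla(f\circ\phi^{-1})|$, whereas at this point you need $|\nabla\hat f_j(\xi)|\le(1+\epsilon)\,|\grad f_j(x)|_g$, which comes from condition 3 of Lemma \ref{coveringlemma} (equivalently the bi-Lipschitz bound \eqref{length}). With these corrections your argument coincides with the paper's, up to the harmless reordering that you approximate $f$ by smooth $f_j$ at the outset and pass to the limit at the end (the passage on the left-hand side still requires Fatou along an a.e.\ convergent subsequence, as in the paper), and up to the exact exponent of $(1-\epsilon)$, which is immaterial since $\epsilon\to0$ later.
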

\\
\\
\begin{proof} We may assume without loss of generality that \(\varepsilon<\frac{1}{3}\).
	We divide the proof into two steps:\\
	\\
	{\bf Step 1:} An upper estimate for $\displaystyle \integral[E]{ \integral[d(x,y) < R]{ \frac{|f(x)-f(y)|^p}{d(x,y)^p} \rho_\sigma(d(x,y)) }[V_g(y)] }[V_g(x)]$.\\ 

	First, assume $f \in C^1(M)$.
	Fix $\epsilon \in (0,1/3)$ and let $(U_k)_{k=1}^{N}, N = N(\epsilon)$, be a family of open sets as in Lemma \ref{coveringlemma} and $(V_k, \phi_k)$ the corresponding charts such that $U_k \subseteq V_k$ and (\ref{length}) and (\ref{detg}) hold.
	The following computations are carried out for fixed $k \in \simpleset{1,\dots,N}$, which we will omit for better readability, and with respect to the aforementioned chart.
	Using (\ref{length}) and (\ref{detg}) as well as the monotonicity of $\rho_\sigma$ we have ($\xi := \phi(x), \eta := \phi(y)$)
	\begin{align*}
		& \integral[U \cap E]{
			\integral[B_R^M(x)]{
				 \frac{|f(x)-f(y)|^p}{d(x,y)^p} \rho_\sigma(d(x,y))
			}[V_g(y)]
		}[V_g(x)] \\
		&\le \integral[\phi(U \cap E)]{
			\integral[B_{\frac R {1-\epsilon}}^n(\xi)]{
				\frac{|\hat f(\xi) - \hat f(\eta)|^p}{(1-\epsilon)^p|\xi - \eta|^p} \rho_\sigma((1-\epsilon)|\xi-\eta|) \sqrt{\det \gabhat(\xi)} \sqrt{\det \gabhat(\eta)}
			}[\eta]
		}[\xi] \\
		& \le \frac{(1+\epsilon)^2}{(1-\epsilon)^p} \integral[\phi(U \cap E)]{
			\integral[B_{\frac R {1-\epsilon}}^n(0)]{
				\frac{|\hat f(\xi) - \hat f(\xi + h)|^p}{|h|^p} \rho_\sigma((1-\epsilon)|h|)
				}[h]
			}[\xi] \\
			&\le \frac{(1+\epsilon)^2}{(1-\epsilon)^p} \integral[\phi(U \cap E)]{
				\integral[B_{\frac R {1-\epsilon}}^n(0)]{
					\integral[0][1]{
						\frac{|\nabla \hat f (\xi + th) \cdot h|^p}{|h|^p}
						\rho_\sigma((1-\epsilon)|h|) 
					}[t]
				}[h]
			}[\xi] \\
			& \stackrel{\tilde \xi = \xi + th} \le \frac{(1+\epsilon)^2}{(1-\epsilon)^p} \integral[B_{\frac R {1-\epsilon}}^n(0)]{
				\integral[0][1]{
					\integral[\phi(U \cap E)^{\frac R {1-\epsilon}}]{
					\frac{|\nabla \hat f (\tilde \xi) \cdot h|^p}{|h|^p}
					\rho_\sigma((1-\epsilon)|h|)
				}[\tilde \xi]
			}[t]
		}[h],
	\end{align*}
	where we applied Fubini's theorem in the last step.
	Choosing a unit vector $e \in \unitsphere n$, which can be thought of as $\frac {\nabla \hat f(\tilde \xi)}{|\nabla \hat f(\tilde \xi)|}$ for all $\tilde \xi$, for which $\nabla \hat f(\tilde \xi) \neq 0$ , we factorize the last expression in our chain of inequalities as
	\begin{align}
		\label{twointegralfactors}
		\frac{(1+\epsilon)^2}{(1-\epsilon)^p} & \left( \integral[\phi(U \cap E)^{\frac R {1-\epsilon}}]{
				|\nabla \hat f (\tilde \xi)|^p
		}[\tilde \xi] \right) \integral[B_{\frac R {1-\epsilon}}^n(0)]{
			\left|e \cdot \frac{h}{|h|} \right|^p \rho_\sigma((1-\epsilon)|h|)
		}[h].
	\end{align}
	We introduce spherical coordinates for $h$ and further rewrite the second integral as
	\begin{align*}
		& \integral[\unitsphere n]{|e \cdot u|^p}[\haumeas{n-1}(u)] \cdot
		\integral[0][\frac R {1-\epsilon}]{\rho_\sigma((1-\epsilon)r) r^{n-1}}[r]  \\
		& = \haumeas{n-1}(\unitsphere n) K_{p,n} (1-\epsilon)^{-n} \integral[0][R]{\rho_\sigma(r) r^{n-1}}[r] \le (1-\epsilon)^{-n} K_{p,n},
	\end{align*}
	since $\integral[0][\infty]{ \rho_\sigma(r) r^{n-1}}[r] = \frac 1 {\haumeas{n-1}(\unitsphere n)}$.
	
	Finally, we transform the integration in $\tilde \xi$ in (\ref{twointegralfactors}) back to an integral over a subset of $M$: 
	The equivalence of Euclidean and geodesic distance (\ref{length}) on one hand implies
	\begin{equation*}
		\phi^{-1}(\phi(U \cap E)^{\frac R {1-\epsilon}}) \subseteq (U \cap E)^{\frac{1+\epsilon}{1-\epsilon}R}\subseteq (U\cap E)^{2R},
	\end{equation*}
	and we choose $R > 0$ in such way, that $4 R < \tau_0$ in condition 2 of Lemma \ref{coveringlemma} (the factor 2 ensures the validity of equation (\ref{onesetestimate}), where $U_k$ is replaced by $U_k^{2R}$, which we  need later).
	On the other hand condition 3 of the same Lemma assures that $|\nabla \hat f(\tilde \xi)| = \left|d\phi|_x (\grad f(x))\right| \le (1+\epsilon) |\grad f(x)|_g$, where $\phi(x) = \tilde \xi$, so using (\ref{detg})
	\begin{align*}
		\integral[\phi(U \cap E)^{\frac R {1-\epsilon}}]{
				|\nabla \hat f (\tilde \xi)|^p
			}[\tilde \xi] \le \frac{(1+\epsilon)^p}{1-\epsilon} \integral[(U \cap E)^{2R}]{
				|\grad f(x)|_g^p
			}[V_g(x)].
	\end{align*}
	After reintroducing the index $k$ the inequality we have proved so far reads as
	\begin{align}
		\label{onesetestimate}
		\integral[U_k \cap E]{
			\integral[B_R^M(x)]{
				 \frac{|f(x)-f(y)|^p}{d(x,y)^p} \rho_\sigma(d(x,y))
			}[V_g(y)]
		}[V_g(x)] \le \frac{(1+\epsilon)^{p+2}}{(1-\epsilon)^{p+n+1}} K_{p,n} \integral[(U_k \cap E)^{2R}]{
			|\grad f(x)|_g^p
		}[V_g(x)].
	\end{align}

	By Propositions \ref{sobapprox} and \ref{bvapprox}, as well as Fatou's lemma, this inequality holds true for all $f \in W^{1,p}(M)$ if $p > 1$ or $f \in BV(M)$ if $p=1$, respectively, where in the latter case $\integral[(U_k \cap E)^{2R}]{
			|\grad f(x)|_g^p
		}[V_g(x)]$ needs to be replaced by $|Df|(U_k \cap E^{2R})$. 
		First, assume that $p > 1$ and $f \in W^{1,p}(M)$:

	The domain of integration $(U_k \cap E)^{2R}$ in \eqref{onesetestimate} is contained in the intersection 
	\begin{equation*}
		U_k^{2R} \cap E^{2R} = (U_k \cap E^{2R}) \cup ((U_k^{2R} \setdiff U_k) \cap E^{2R}),
\end{equation*}
so if we sum up over all $k$ and note that the $U_k$ cover $E$ up to a set of measure zero by Lemma \ref{coveringlemma}, 4., we have
	\begin{align*}
		& \integral[E]{
			\integral[B_R^M(x)]{
				 \frac{|f(x)-f(y)|^p}{d(x,y)^p} \rho_\sigma(d(x,y))
			}[V_g(y)]
		}[V_g(x)] \\
		\le & \frac{(1+\epsilon)^{p+2}}{(1-\epsilon)^{p+n+1}} K_{p,n} \left( \sum_{k = 1}^N \integral[U_k \cap E^{2R}]{|\grad f|_g^p}[V_g] + \sum_{k=1}^N \integral[U_k^{2R} \setdiff U_k]{|\grad f|_g^p}[V_g] \right) \\
		\le & \frac{(1+\epsilon)^{p+2}}{(1-\epsilon)^{p+n+1}} K_{p,n} \left( \integral[E^{2R}]{|\grad f|_g^p}[V_g] + \sum_{k=1}^N \integral[U_k^{2R} \setdiff U_k]{|\grad f|_g^p}[V_g]\right).
	\end{align*}
	The sets $U_k^{2R} \setdiff U_k$ converge to $\partial U_k$ as $R \to 0$, which by Lemma \ref{coveringlemma}, 4., satisfy $\integral[\partial U_k]{}[V_g] = 0$.
	Thus,  put 
	\begin{equation}
		\label{Gdef}
		G_{\varepsilon}(R)=\frac{(1+\varepsilon)^{p+2}}{(1-\varepsilon)^{p+n+1}} K_{p,n} \left(\sum_{k=1}^N \integral[U_k^{2R} \setdiff U_k]{|\grad f|_g^p}[V_g] \right)\quad.
	\end{equation}
	In the case of $p=1$ and $f \in BV(M)$ all computations up to \eqref{Gdef} carry over verbatim, where all integrals of the form $\integral[A]{|\grad f|_g^p}[V_g]$ need to be replaced by $|Df|(A)$ and we need to apply property 4' of Lemma \ref{coveringlemma} to show that $\lim_{R \to 0} G_\epsilon(R) = 0$.
	\\
	\\
	{\bf Step 2:} An upper estimate for $\displaystyle \integral[E]{ \integral[d(x,y) \ge R]{ \frac{|f(x)-f(y)|^p}{d(x,y)^p} \rho_\sigma(d(x,y)) }[V_g(y)] }[V_g(x)]$.\\ 

	For the remaining region consisting of all pairs $(x,y)$ such that $x \in E$ and $d(x,y) \ge R$ we estimate
	\begin{align*}
		\integral[E]{
			\integral[\compl{B_R^M(x)}]{
				\frac{|f(x)-f(y)|^p}{d(x,y)^p} \rho_\sigma(d(x,y))
			}[V_g(y)]
		}[V_g(x)] \le \frac {2^{p-1}} {R^p} (I_1 + I_2),
	\end{align*}
	where
	\begin{align*}
		I_1 & := \integral[E]{
			|f(x)|^p \integral[\compl{B_R^M(x)}]{
				\rho_\sigma (d(x,y))
			}[V_g(y)]
		}[V_g(x)],\ \text{and}\\
		I_2 & := \integral[E]{
			\integral[\compl{B_R^M(x)}]{
				|f(y)|^p \rho_\sigma(d(x,y))
			}[V_g(y)]
		}[V_g(x)].
	\end{align*}
	By monotonicity of $\rho_\sigma$, we estimate
	\begin{align*}
		I_1 \le \text{Vol}_g (M) \rho_\sigma(R) \norm[L^p(M)]{f}^p,
	\end{align*}
	where $\rho_\sigma(R)$ tends to zero as $\sigma \to 0$.
		
	For $I_2$, we observe that the set $K := \set{d(x,y)}{x \in E, y \in \compl{B_R^M(x)}}$ is closed and therefore compact, such that
	\begin{align*}
		I_2 \le C_\sigma \text{Vol}_g(M) \norm[L^p(M)]{f}^p,
	\end{align*}
	where the sequence $C_\sigma := \sup_{r \in K} \rho_\sigma(r)$ converges to zero by locally uniform convergence.
	
	Therefore, putting $\alpha_\sigma := 2^{p-1}\text{Vol}_g(M)(\rho_\sigma(R) + C_\sigma)$, we have
	\begin{align*}
		\integral[E]{
			\integral[\compl{B_R^M(x)}]{
				\frac{|f(x)-f(y)|^p}{d(x,y)^p} \rho_\sigma(d(x,y))
			}[V_g(y)]
		}[V_g(x)] \le \frac {\alpha_\sigma} {R^p} \norm[L^p(M)]{f}^p.
	\end{align*}
\end{proof}
\\
Analogously to \cite{davila} we have the following result of weak-* convergence of Radon measures:

\begin{theorem}
	\label{weakstarconv}
	If $p > 1$ and $f \in W^{1,p}(M)$, the Radon measures $\mu_{\sigma,p}$ defined in (\ref{musigma}) weakly-* converge to $K_{p,n} |\grad f|_g^p\, \mathrm{d}V_g$ as $\sigma \to 0$.

	If $p = 1$ and $f \in BV(M)$, the measures $\mu_{\sigma,1}$ weakly-* converge to $K_{1,n}|Df|$ as $\sigma \to 0$.
\end{theorem}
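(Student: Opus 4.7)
The plan is to prove weak-$*$ convergence via the Portmanteau-type characterization on the compact space $M$, which reduces the claim to the pair of one-sided bounds
\begin{enumerate}
\item[(A)] $\limsup_{\sigma \to 0} \mu_{\sigma,p}(C) \leq \mu(C)$ for every closed $C \subseteq M$, and
\item[(B)] $\mu_{\sigma,p}(M) \to \mu(M)$ as $\sigma \to 0$,
\end{enumerate}
where $\mu$ denotes the claimed limit measure ($K_{p,n}|\grad f|_g^p\, \mathrm{d}V_g$ when $p > 1$, respectively $K_{1,n}|Df|$ when $p = 1$). For open $U = M \setminus C$, the identity $\mu_{\sigma,p}(U) = \mu_{\sigma,p}(M) - \mu_{\sigma,p}(C)$ turns (A) and (B) into $\liminf_\sigma \mu_{\sigma,p}(U) \geq \mu(U)$, and these two one-sided bounds together are equivalent to weak-$*$ convergence of finite Radon measures on $M$.

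Claim (A) is essentially immediate from Proposition \ref{fixed-i-estimate} applied with $E = C$: letting $\sigma \to 0$ kills the $\alpha_\sigma \norm[L^p(M)]{f}^p/R^p$ term, letting $R \to 0$ kills $G_\epsilon(R)$ and also gives $\integral[C^{2R}]{|\grad f|_g^p}[V_g] \to \integral[C]{|\grad f|_g^p}[V_g]$ (respectively $|Df|(C^{2R}) \to |Df|(C)$) by outer regularity of finite measures, and letting $\epsilon \to 0$ removes the prefactor $(1+\epsilon)^{p+2}/(1-\epsilon)^{p+n}$. The upper half of (B) is (A) applied with $C = M$; the remaining content is the lower bound $\liminf_\sigma \mu_{\sigma,p}(M) \geq \mu(M)$.

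For $f \in \compsptc{\infty}(M)$ the lower bound follows by reversing the chart argument in Step 1 of the proof of Proposition \ref{fixed-i-estimate}. The other half of \eqref{length} reads $d(x,y) \leq (1+\epsilon)|\phi_k(x)-\phi_k(y)|$, and monotonicity of $\rho_\sigma$ flips the $\rho_\sigma$-bound in the same calculation. In place of the fundamental-theorem-of-calculus step one uses the uniform Taylor expansion $\hat f(\xi+h) = \hat f(\xi) + \nabla \hat f(\xi) \cdot h + O(|h|^2)$, available since $M$ is compact and $f$ is smooth; polar integration in $h$ isolates $K_{p,n}$ as in \eqref{kpn}, while the $O(|h|^2)$ remainder produces an error of order $\integral[0][\infty]{r\,\rho_\sigma(r)\,r^{n-1}}[r]$, which vanishes as $\sigma \to 0$ by concentration of the probability density $\rho_\sigma(r)\,r^{n-1}\,\mathrm{d}r$ at $r=0$ (a consequence of \eqref{eq:fixedmass}--\eqref{eq:tailto0}). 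Step 2 of Proposition \ref{fixed-i-estimate} handles the long-distance contribution. Passing $\sigma \to 0$ and then $\epsilon \to 0$ yields the lower bound for smooth $f$.

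To extend to general $f$: if $p > 1$, use density of $\compsptc{\infty}(M)$ in $W^{1,p}(M)$ to choose $f_j \to f$ in the $W^{1,p}$-norm. Since $N_\sigma(f) := \mu_{\sigma,p}(f;M)^{1/p}$ is a seminorm by Minkowski's inequality applied to the product kernel $\rho_\sigma(d(x,y))\,d(x,y)^{-p}\,\mathrm{d}V_g\otimes \mathrm{d}V_g$, the reverse triangle inequality combined with the smooth-case lower bound for $f_j$ and with (A) applied to $f - f_j$ (with $C = M$) yields
\[
\liminf_{\sigma \to 0} N_\sigma(f) \;\geq\; K_{p,n}^{1/p}\left(\norm[L^p]{\grad f_j} - \norm[L^p]{\grad(f-f_j)}\right),
\]
and letting $j \to \infty$ closes the argument. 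The BV case $p=1$ is the main obstacle: the analogous triangle bound only gives $|D(f-f_j)|(M) \leq |Df|(M) + |Df_j|(M)$, which is not small, so this reduction fails. Instead, following D\'{a}vila \cite{davila} and Miranda et al.\ \cite{miranda}, one invokes the coarea decompositions $\mu_{\sigma,1}(f;M) = \integral[-\infty][\infty]{\mu_{\sigma,1}(\indicator{\{f>t\}};M)}[t]$ and $|Df|(M) = \integral[-\infty][\infty]{P(\{f>t\})}[t]$, applies Fatou to reduce the lower bound to sets of finite perimeter, and then handles the indicator case $f = \indicator E$ by a careful mollification exploiting $\integral[M]{|\grad u_\delta|_g}[V_g] \to P(E)$ from Proposition \ref{bvapprox}. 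This BV lower bound is the technically most delicate step of the argument.
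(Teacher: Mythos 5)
Your overall architecture is legitimate and partly different from the paper's: instead of extracting weak-* convergent subsequences and identifying every limit measure on Borel sets (Steps 1--3 of the paper), you use the Portmanteau characterization (upper bound on closed sets plus convergence of total masses), which is equivalent here since all measures are finite and $M$ is compact. Your claim (A) is indeed a direct consequence of Proposition \ref{fixed-i-estimate}, and your treatment of the lower bound for $p>1$ is correct and arguably cleaner than the paper's: the Taylor-expansion reversal of the chart estimate for smooth $f$ works (the remainder is controlled exactly as you say, using \eqref{eq:fixedmass}--\eqref{eq:tailto0} and the $R\to 0$ limit), and the passage to general $f\in W^{1,p}(M)$ via density of smooth functions, Minkowski's inequality for $N_\sigma$, and the upper bound applied to $f-f_j$ avoids the paper's chart-wise mollification and its appeal to the Euclidean BBM limit.

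The genuine gap is the case $p=1$, which is the main content of the theorem, and there your proposal stops at a sketch. The coarea decomposition plus Fatou does reduce matters to indicator functions, but the indicator case still requires precisely the estimate you never supply: a bound, uniform in $\sigma$, of the form $\mu_{\sigma,1}(u_\delta;U)\le(1+o_\epsilon)\,\mu_{\sigma,1}(\indicator E;U^{(1+\epsilon)\delta})+\dots$ for the chart-wise mollifications $u_\delta$ of $\indicator E$. Knowing $\int_M|\grad u_\delta|_g\,\mathrm{d}V_g\to P(E)$ and the smooth-case lower bound for $u_\delta$ only gives information about $\mu_{\sigma,1}(u_\delta)$, not about $\mu_{\sigma,1}(\indicator E)$; the bridge between the two is the heart of the matter. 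In the Euclidean setting of D\'avila this contraction is immediate from Jensen's inequality and translation invariance of mollification, but on a manifold the mollification is performed in charts, so one must track the distance and volume distortions \eqref{length}--\eqref{detg}, the enlarged domains $U^{(1+\epsilon)\delta}$, and the rescaled kernels --- this is exactly the paper's chain \eqref{snormconv}--\eqref{upperboundmu}, followed by lower semicontinuity of the weighted variation \eqref{weightedvarbyvar} and Lemma \ref{diffchart} to return from $|D(f\circ\phi^{-1})|_w(\phi(U))$ to $|Df|(U)$, plus the boundary conditions 4 and 4' of Lemma \ref{coveringlemma} to discard the overlap terms. Citing D\'avila and Miranda et al.\ does not close this step, since the manifold-specific distortion control is precisely what is at stake; and once you do have that estimate, the coarea detour is superfluous, because the argument then applies to a general $f\in BV(M)$ directly, as in the paper.
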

\\
\\
\begin{proof}
	Proposition \ref{fixed-i-estimate} shows, that for $p \ge 1$ and every compact set $E \subseteq M$
	\begin{equation*}
		\sup_{0<\sigma<1} \mu_{\sigma,p} (E) < \infty,
	\end{equation*}
	so by weak-* compactness there exists a subsequence $\mu_{\sigma_i,p} =: \mu_{i,p}$ and a limit measure $\mu_p$ such that $\mu_{i,p} \stackrel{i \to \infty}{\to} \mu_p$ with respect to the  weak-* topology.
	We need to show, that for every such subsequence $\mu_p = K_{p,n}\nu_p$ , where the measure $\nu_p$ is defined as
	\begin{equation}
		\label{nudef}
		\nu_p(A) :=
	\begin{cases}
		\integral[A]{|\grad f|_g^p}[V_g], & \text{if}\ p > 1, \\ |Df|(A), & \text{if}\ p=1,
\end{cases}
\end{equation}
for every Borel set $A \subseteq M$.\\
	\clearpage
	{\bf Step 1:} $\mu_p(A) \le K_{p,n} \nu_p(A)$ for every Borel set $A \subseteq M$.\\

	By inner regularity of Radon measures, it suffices to prove the inequality for compact sets $E \subseteq M$.
	We apply Proposition \ref{fixed-i-estimate} with $E$ replaced by $\overline{E^{2R}}$ for $\epsilon > 0$ and $R < R_0$.
	Note that the weak-* convergence of the sequence $(\mu_{i,p})$ implies that $\displaystyle \mu_p( E^{2R}) \le \liminf_{i \to \infty} \mu_{i,p}( E^{2R})$, so we get
	\begin{align*}
		\mu_p(E) & \le \mu_p(E^{2R})  \le \liminf_{i \to \infty} \mu_{i,p}( E^{2R}) \le K_{p,n} \nu_p(E^{4R}) + G_{\varepsilon}(2R).
	\end{align*}
	Letting $R \to 0$ and then $\epsilon \to 0$ we obtain the desired inequality, since by compactness $E^{4R} \to E$ as $R \to 0$.
	\\
	\\
	{\bf Step 2:} $\mu_p(M) \ge K_{p,n} \nu_p(M)$.\\

	This step uses a regularization argument similar to the proofs in \cite{leonispector};
	consider a regularization kernel $\psi \in \compsptc \infty (\Real^n)$ with $\integral[\Real^n]{\psi} = 1$ and $\support \psi \subset B_1^n(0)$, and for $\delta > 0$ set
	\begin{equation*}
		\psi_\delta(x) := \frac 1 {\delta^n} \psi \left(\frac x \delta \right),\ x \in \Real^n.
	\end{equation*}
	For $U \subseteq \Real^n$ open we define the mollification of a function $g \in L_{loc}^1 (U)$ for every $x \in U$ with $d(x, \partial U) > \delta$ by 
	\begin{equation*}
		g_\delta (x) := \integral[\Real^n]{f(x-\zeta) \psi_\delta(\zeta)}[\zeta].
	\end{equation*}
	Note that \(g_{\delta}\) is a \(C^{\infty}\) function. Furthermore, fix $\epsilon \in (0,1)$ and consider a finite family of open sets $(U_k)_{k=1}^N$ and corresponding charts $(V_k,\phi_k)$ as in Lemma \ref{coveringlemma} with $E = M$.
	Then define the functions $f_{k,\delta}: U_k \to \Real$ for $k = 1,\dots,N$ and  $\delta > 0$ sufficiently small by $f_{k,\delta}(x) := (f \circ \phi_k^{-1})_\delta (\phi_k (x))$, i.e.
	\begin{equation}
		\label{convolutionmf}
		f_{k,\delta}(x) = \integral[B_\delta^n]{
			(f \circ \phi_k^{-1}) (\phi_k(x) - \zeta) \psi_\delta(\zeta)
		}[\zeta].
	\end{equation}
	Note that \(f_{k,\delta}\) is defined for every \(x\in U_k\) since by property 2. of Lemma \ref{coveringlemma} the function \(\phi_k\) is defined on an \(U_k^{\tau}\) for some \(\tau>0\). Again, \(f_{k,\delta}\) is a \(C^{\infty}\) function on \(U_k\).
	The following calculations take place in only one $U_k$ for $k$ fixed, so we oppress the index $k$ for the sake of readability.
	We denote the radial mollifiers corresponding to the subsequence $\mu_{i,p}$ by $\rho_i$.
	Putting $\xi := \phi(x)$ and $\eta := \phi(y)$ we estimate
	\begin{align}
		& \label{snormconv} \integral[U]{
		\integral[U]{
			\frac {|f_\delta(x) - f_\delta(y)|^p} {d(x,y)^p} \rho_i(d(x,y))
		}[V_g(y)]
	}[V_g(x)] \\
	& \notag \le \frac{(1+\epsilon)^2}{(1-\epsilon)^p} \integral[\phi(U)]{
		\integral[\phi(U)]{
			\frac {| \integral[B_\delta^n]{((f \circ \phi^{-1})(\xi - \zeta) - (f \circ \phi^{-1})(\eta - \zeta)) \psi_\delta(\zeta)}[\zeta]|^p } {|\xi - \eta|^p} \rho_i ((1-\epsilon)|\xi - \eta|)
		}[\eta]
	}[\xi] \\
	& \notag \le \frac{(1+\epsilon)^2}{(1-\epsilon)^p} \integral[\phi(U)]{
		\integral[\phi(U)]{
			\frac { \integral[B_\delta^n]{|(f \circ \phi^{-1})(\xi - \zeta) - (f \circ \phi^{-1})(\eta - \zeta)|^p \psi_\delta(\zeta)}[\zeta] } {|\xi - \eta|^p} \rho_i ((1-\epsilon)|\xi - \eta|)
		}[\eta]
	}[\xi] \\
	& \notag \le \frac{(1+\epsilon)^2}{(1-\epsilon)^p} \integral[\phi(U)^{\delta}]{
		\integral[\phi(U)^{\delta}]{
			\integral[B_\delta^n]{
				\frac {|(f \circ \phi^{-1})(\xi ) - (f \circ \phi^{-1})(\eta)|^p } {|\xi - \eta|^p}\psi_\delta(\zeta) \rho_i ((1-\epsilon)|\xi - \eta|)
			}[\zeta]
		}[\eta]
	}[\xi] \\
	& \notag = \frac{(1+\epsilon)^2}{(1-\epsilon)^p} \integral[\phi(U)^{\delta}]{
		\integral[\phi(U)^{\delta}]{
			\frac {|(f \circ \phi^{-1})(\xi ) - (f \circ \phi^{-1})(\eta)|^p } {|\xi - \eta|^p} \rho_i ((1-\epsilon)|\xi - \eta|)	
		}[\eta]
	}[\xi]\integral[B_\delta^n]{\psi_\delta(\zeta)}[\zeta] \\
	& \notag = \frac{(1+\epsilon)^2}{(1-\epsilon)^p} \integral[\phi(U)^\delta]{
		\integral[\phi(U)^\delta]{
				\frac {|(f \circ \phi^{-1})(\xi) - (f \circ \phi^{-1})(\eta)|^p } {|\xi - \eta|^p} \rho_i ((1-\epsilon)|\xi - \eta|)
		}[\eta]
	}[\xi]\\
	& \notag \le \frac{(1+\epsilon)^{p+2}}{(1-\epsilon)^{p+2}}  \integral[U^{(1+\epsilon)\delta}]{
		\integral[U^{(1+\epsilon)\delta}]{
			\frac{|f(x) - f(y)|^p} {d(x,y)^p} \rho_i \left( \frac{1-\epsilon}{1+\epsilon} d(x,y) \right)
		}[V_g(y)]
	}[V_g(x)] \\
	& \label{regcontraction} \le \left( \frac{1+\epsilon}{1-\epsilon} \right)^{p+2} \integral[U^{(1+\epsilon)\delta}]{
		\integral[M]{
			\frac{|f(x) - f(y)|^p} {d(x,y)^p} \rho_i \left( \frac{1-\epsilon}{1+\epsilon} d(x,y) \right)
		}[V_g(y)]
	}[V_g(x)] 
\end{align}
\begin{flalign}
	& \notag = \left( \frac{1+\epsilon}{1-\epsilon} \right)^{-n+p+2} \integral[U^{(1+\epsilon)\delta}]{
		\integral[M]{
			\frac{|f(x) - f(y)|^p} {d(x,y)^p} \tilde \rho_i(d(x,y))
		}[V_g(y)]
	}[V_g(x)] \\
	& \label{upperboundmu} = \left( \frac{1+\epsilon}{1-\epsilon} \right)^{-n+p+2} \left(\tilde \mu_{i,p}(U) + \tilde \mu_{i,p}\left(U^{(1+\epsilon)\delta} \setdiff U\right)\right),
	\end{flalign}
	where $\tilde \rho_i(r) := \left( \frac {1-\epsilon}{1+\epsilon} \right)^n \rho_i \left( \frac{1-\epsilon}{1+\epsilon} r \right)$ and $\tilde \mu_{i,p}$ is the measure defined by replacing $\rho_i$ with $\tilde \rho_i$ in (\ref{musigma}).

	On the other hand \eqref{snormconv} can be estimated from below via
	\begin{align}
	& \notag \integral[U]{
		\integral[U]{
			\frac {|f_\delta(x) - f_\delta(y)|^p} {d(x,y)^p} \rho_i(d(x,y))
		}[V_g(y)]
	}[V_g(x)] \\
	& \label{convlowbound} \ge \frac{1-\epsilon}{(1+\epsilon)^p} \integral[\phi(U)]{
		\sqrt{\det (\gabhat(\xi))}
		\integral[\phi(U)]{
			\frac {|(f \circ \phi^{-1})_\delta (\xi) - (f \circ \phi^{-1})_\delta (\eta)|^p} {|\xi - \eta|^p} \rho_i((1+\epsilon)|\xi - \eta|)
		}[\eta]
	}[\xi],
	\end{align}
	where the inner integral converges to
	\begin{equation*}
		(1+\epsilon)^{-n} K_{p,n} |\nabla (f \circ \phi^{-1})_\delta (\xi)|^p
	\end{equation*}
	as $i \to \infty$, see \cite[(6)]{bbm}.
	Since the integrand of the outer integral can be estimated by Lipschitz continuity of $(f \circ \phi^{-1})_\delta$ with Lipschitz constant $L_{\delta} > 0$ via
	\begin{align*}
		& \sqrt{ \det(\gabhat(\xi)) } \integral[\phi(U)]{
			\frac {|(f \circ \phi^{-1})_\delta (\xi) - (f \circ \phi^{-1})_\delta (\eta)|^p} {|\xi - \eta|^p} \rho_i((1+\epsilon)|\xi - \eta|)
		}[\eta] \\
		& \le \sqrt{ \det(\gabhat(\xi)) } \integral[\phi(U)]{
			\frac {L_{\delta}^p |\xi - \eta|^p}{|\xi - \eta|^p} \rho_i( (1+\epsilon) |\xi - \eta|)
		}[\eta] \le L_{\delta}^p \sqrt{ \det(\gabhat(\xi)) },
	\end{align*}
	we can apply the dominated convergence theorem for the $\xi$-integration in \eqref{convlowbound}.

	Now we put the estimates \eqref{upperboundmu} and \eqref{convlowbound} together:
	\begin{align}
		& \notag \frac{1-\epsilon}{(1+\epsilon)^p} \integral[\phi(U)]{
		\sqrt{\det (\gabhat(\xi))}
		\integral[\phi(U)]{
			\frac {|(f \circ \phi^{-1})_\delta (\xi) - (f \circ \phi^{-1})_\delta (\eta)|^p} {|\xi - \eta|^p} \rho_i((1+\epsilon)|\xi - \eta|)
		}[\eta]
	}[\xi] \\
	& \label{lowerandupperest} \le \left( \frac{1+\epsilon}{1-\epsilon} \right)^{-n+p+2} (\tilde \mu_{i,p}(U) + \tilde \mu_{i,p}(U^{(1+\epsilon)\delta} \setdiff U)).
	\end{align}

	We claim that	
	\begin{equation}
		\label{differentrhos}
		\limsup_{i \to \infty} \tilde \mu_{i,p} (\overline U) = \limsup_{i \to \infty} \mu_{i,p} (\overline U) + o_\epsilon,
	\end{equation}
	where $o_\epsilon \to 0$ as $\epsilon \to 0$.
	First observe that the sequence $(\tilde \rho_i)_{i \in \Natural}$ is a sequence of radial mollifiers (for $i \to \infty$) itself, such that for $f \in C^1 (M)$ we can repeat the calculations in the proof of Proposition \ref{fixed-i-estimate}, but rather than using one mollifier, we plug in the difference $\rho_i \left(\frac{1-\epsilon}{1+\epsilon} d(x,y) \right) - \rho_i(d(x,y))$, which is non-negative by monotonicity of $\rho_i$, instead:
	\begin{align*}
		 & \integral[\overline U]{
			\integral[B_R^M(x)]{
				\frac{|f(x)-f(y)|^p}{d(x,y)^p} \left( \rho_i \left( \frac{1-\epsilon}{1+\epsilon}d(x,y) \right) - \rho_i(d(x,y)) \right)
			}[V_g(y)]
		}[V_g(x)]\\
		& \le (1+\epsilon)^2 \integral[\phi(\overline U)]{ 
			\integral[B_{\frac R {1-\epsilon}}^n]{
				\frac{|\hat f(\xi) - \hat f(\eta)|^p}{(1-\epsilon)^p|\xi - \eta|^p}
				\left( \rho_i \left(\frac{(1-\epsilon)^2}{1+\epsilon}|\xi-\eta| \right) - \rho_i( (1+\epsilon)|\xi-\eta|) \right)
			}[\eta]
		}[\xi],
	\end{align*}
	where on the right-hand side we used the equivalence of distances \eqref{length} accordingly.
	Following the proof of Proposition \ref{fixed-i-estimate} up to \eqref{twointegralfactors} with the obvious modifications, we see that the last expression does not exceed
	\begin{align}
		\label{rhodiff}
		(1 + o_\epsilon) K_{p,n} \nu_p(U^{2R}) \integral[0][\frac{R}{1-\epsilon}]{\left( \rho_i \left( \frac{(1-\epsilon)^2}{1+\epsilon} r \right) - \rho_i((1+\epsilon)r) \right)r^{n-1}}[r],
	\end{align}
	and this estimate from above still holds true for $f \in W^{1,p}(M)$ or $f \in BV(M)$, if $p=1$, respectively, as can been seen by approximation.
	As $i \to \infty$ \eqref{rhodiff} converges to a remainder $o_\epsilon$, which is 0 as $\epsilon \to 0$.
	The integral over the remaining domain, consisting of all pairs $x \in \overline U, y \notin B_R^M(x)$, is zero in the limit, which we already have seen in Step 2 in the proof of Proposition \ref{fixed-i-estimate}, thus verifying \eqref{differentrhos}.

	Applying the limit in \eqref{lowerandupperest}, and noting that by weak-* convergence $\displaystyle \limsup_{i \to \infty} \mu_{i,p} (\overline U) \le \mu_p (\overline U)$, we obtain
	\begin{align}
		\label{detintegral}
		K_{p,n} \integral[\phi(U)]{
			\sqrt{\det(\gabhat(\xi))} |\nabla (f \circ \phi^{-1})_\delta (\xi)|^p
		}[\xi] \le (1+o_\epsilon) (\mu_p (\overline U) + \mu_p (\overline{U^{(1+\epsilon)\delta} \setdiff U})).
	\end{align}
	Now weed need to distinguish, whether $f \in W^{1,p}(M)$ or $f \in BV(M)$:

	First, let $p > 1$ and $f \in W^{1,p}(M)$.
	Since $|\nabla (f \circ \phi^{-1})_\delta|$ tends to $|\nabla (f \circ \phi^{-1})|$ in $L^p(\phi(U))$ as $\delta \to 0$, and $|\nabla (f \circ \phi^{-1}) (\xi)|^p \ge \frac 1 {(1+\epsilon)^p} |\grad f(\phi^{-1}(\xi))|_g^p$ by Lemma \ref{diffchart}, we have that
	\begin{align*}
		K_{p,n} \integral[U]{|\grad f|_g^p}[V_g] \le (1 + o_\epsilon) (\mu_p(\overline U) + \mu_p(\partial U)).
	\end{align*}

	If $p = 1$ and $f \in BV(M)$, by \eqref{weightedvarbyvar} the integral on the left-hand side of \eqref{detintegral} is equal to the weighted variation $|D(f \circ \phi^{-1})_\delta|_w (\phi(U))$ with weight $w(\xi) = \sqrt{\det(\gabhat(\xi))}$.
	The convolutions $(f \circ \phi^{-1})_\delta$ converge in $L^1 (\phi(U))$ to the function $f \circ \phi^{-1}$, and furthermore
	\begin{align*}
		\integral[\phi(U)]{
			|(f \circ \phi^{-1}) - (f \circ \phi^{-1})_\delta| \sqrt{\det(\gabhat)}
		} \le (1 + \epsilon) \integral[\phi(U)]{
			|(f \circ \phi^{-1}) - (f \circ \phi^{-1})_\delta|
		} \stackrel {\delta \to 0} \to 0.
	\end{align*}
	Since the map $u \mapsto |Du|_w (\phi(U))$ is lower semicontinuous with respect to convergence in $L^1(\phi(U),w\, \mathrm{d}x)$, letting $\delta \to 0$ we obtain
	\begin{align*}
		K_{1,n} |D(f \circ \phi^{-1})|_w (\phi(U)) \le (1 + o_\epsilon) (\mu_1(\overline U) + \mu_1(\partial U)).
	\end{align*}
	By Lemma \ref{diffchart} the left-hand side can further be estimated by $\frac 1 {(1+\epsilon)} K_{1,n}|Df|(U)$ from below.

	For both cases $p > 1$ and $p = 1$ the resulting inequality can be written as
	\begin{equation*}
		K_{p,n} \nu_p(U) \le (1 + o_\epsilon)(\mu_p(\overline U) + \mu_p(\partial U)).
	\end{equation*}
	By our assumptions 4 and 4' of Lemma \ref{coveringlemma} on the mass on the boundary of $U$, Step 1 guarantees $\mu_p(\partial U) \le K_{p,n} \nu_p(\partial U) = 0$, and in consequence
	\begin{equation*}
		K_{p,n} \nu_p(U) \le (1 + o_\epsilon) \mu_p (U).
	\end{equation*}
	Summing up over all $k$ and letting $\epsilon \to 0$ yields the desired inequality.\\
	\\
	\noindent {\bf Step 3:} $\mu_p(A) \ge K_{p,n}\nu_p(A)$ for every Borel set $A \subseteq M$.\\
	\\
	Since $\mu_p$ is a finite measure, for each Borel set $A \subseteq M$ it holds that
	\begin{equation*}
		\mu_p(A) = \mu_p(M) - \mu_p(\compl A) \ge K_{p,n}\nu_p(M) - K_{p,n}\nu_p(\compl A) = K_{p,n}\nu_p(A)
	\end{equation*}
	by the preceding steps 1 and 2.
\end{proof}
\\
\\
With the weak-* convergence at hand, the proof of Theorem \ref{convergencerho} is not difficult anymore:\\

\begin{proof}[Theorem \ref{convergencerho}]
	First, suppose that $f \in W^{1,p}(M)$, if $p >1$, and $f \in BV(M)$, if $p = 1$.
	Since $M$ is both open and compact, the weak-* convergence of $\mu_{\sigma,p}$ to $K_{p,n}\nu_p$ (with $\nu_p$ defined in \eqref{nudef}), which is established in Theorem \ref{weakstarconv}, implies
	\begin{equation*}
		K_{p,n} \nu_p(M) \le \liminf_{\sigma \to 0} \mu_{\sigma,p}(M) \le \limsup_{\sigma \to 0} \mu_{\sigma,p}(M) \le K_{p,n}\nu_p(M),
	\end{equation*}
	which is the desired result.

	On the other hand, suppose that
	\begin{equation}
		\label{limsupfinite}
		\liminf_{\sigma \to 0} \integral[M]{
			\integral[M]{
				\frac{|f(x)-f(y)|^p}{d(x,y)^p} \rho_\sigma(d(x,y))
			}[V_g(y)]
		}[V_g(x)] < \infty.
	\end{equation}
	We show that $f \in W^{1,p}(M)$ and, if $p=1$, that $f \in BV(M)$. By Propositions \ref{sobapprox} and \ref{bvapprox} it is enough to construct a family $(f_\delta)_{\delta>0}$ of \(C^{\infty}\) functions on $M$, such that $f_\delta \to f$ in $L^p(M)$ as $\delta \to 0$ and
	\begin{equation}
		\label{variationsbound}
		\liminf_{\delta \to 0} \integral[M]{|\grad (f_\delta)|_g^p}[V_g] < \infty\quad.
	\end{equation}
	For $\epsilon \in (0,1)$ introduce the modified metric $\tilde g := \frac{1+\epsilon}{1-\epsilon} g$.
	Note that the corresponding distance function satisfies $\tilde d(x,y) = \frac{1+\epsilon}{1-\epsilon} d(x,y)$ for all $x,y \in M$, and the volume form transforms as $\mathrm{d}V_{\tilde g} = \left( \frac{1+\epsilon}{1-\epsilon} \right)^{\frac n 2} \mathrm{d}V_g$.
	Furthermore, a function on $M$ is of bounded variation with respect to $\tilde g$ if and only if it is with respect to $g$, and the variations coincide up to a factor dependent on $\epsilon$.

	Let $(U_k)_{k=1}^N$ be chosen accordingly to Lemma \ref{coveringlemma} and put \(W_k=U_k^{\tau}\) for some \(\tau<\tau_0\). Then \(W_k\) is a covering of $M$ by open sets, i.e. $M = \bigcup_{k=1}^N W_k$. Let $(\chi_k)_{k=1}^n$ be an underlying smooth partition of unity, i.e. smooth functions $\chi_k: M \to [0,1]$ compactly supported in $W_k$ with $\sum_{k=1}^N \chi_k = 1$.
	If \(\delta>0\) is sufficiently small we are able to define regularization functions  $f_{k,\delta}$ on \(W_k\) according to \eqref{convolutionmf}. Putting $f_\delta := \sum_{k=1}^N \chi_k f_{k,\delta}$ yields a family of \(C^{\infty}\) functions such that $f_{\delta} \to f$ in $L^p(M)$ as $\delta \to 0$.

	We estimate 
	\begin{align}
	& \integral[M]{
		\notag	 \integral[M]{
				 \frac{|f_\delta(x)-f_\delta(y)|^p}{\tilde d(x,y)^p} \rho_\sigma(\tilde d(x,y))
			}[V_{\tilde g}(y)]
	}[V_{\tilde g}(x)]  \\ 
	& \notag \le N^{p-1} \sum_{k=1}^N \left( \integral[W_k]{
			\integral[W_k]{
				\frac{|\chi_k(x) f_{k,\delta}(x)-\chi_k(y)f_{k,\delta}(y)|^p}{\tilde d(x,y)^p} \rho_\sigma(\tilde d(x,y))
			}[V_{\tilde g}(y)]
	}[V_{\tilde g}(x)] \right. \\
	\label{gtildeest} & \qquad \left. +  \integral[M \setdiff W_k]{
			\integral[W_k]{
				\frac{|\chi_k(x) f_{k,\delta}(x)-\chi_k(y)f_{k,\delta}(y)|^p}{\tilde d(x,y)^p} \rho_\sigma(\tilde d(x,y))
			}[V_{\tilde g}(y)]
	}[V_{\tilde g}(x)] \right),
\end{align}
where the integrals over $M \setdiff W_k$ tend to 0 as $\sigma \to 0$, since the support of $\chi_k$ is compact in $W_k$ and therefore $\tilde d(x,y) \ge R > 0$.
The remaining summands can be estimated by
\begin{align*}
	\integral[W_k]{
			\integral[W_k]{
			&	\frac{|\chi_k(x) f_{k,\delta}(x)-\chi_k(y)f_{k,\delta}(y)|^p}{\tilde d(x,y)^p} \rho_\sigma(\tilde d(x,y))
			}[V_{\tilde g}(y)]
	}[V_{\tilde g}(x)] \\
	& \le 2^{p-1} \left( \integral[W_k]{ |f_{k,\delta}(x)|^p
			\integral[W_k]{
				\frac{|\chi_k(x) - \chi_k(y)|^p}{\tilde d(x,y)^p} \rho_\sigma(\tilde d(x,y))
			}[V_{\tilde g}(y)]
	}[V_{\tilde g}(x)] \right. \\
	& \left. \qquad + \integral[W_k]{
			\integral[W_k]{
				\frac{|f_{k,\delta}(x) - f_{k,\delta}(y)|^p}{\tilde d(x,y)^p} \rho_\sigma(\tilde d(x,y))
			}[V_{\tilde g}(y)]
	}[V_{\tilde g}(x)] \right) 
	=: 2^{p-1} (I_{k,1} + I_{k,2}).
	\end{align*}
	Since $\chi_k$ is smooth, the inner integral in $I_{k,1}$ converges to $K_{p,n}|\grad \chi_k(x)|_{\tilde g}^p$ as $\sigma \to 0$, and by dominated convergence we have
	\begin{align*}
		\lim_{\sigma \to 0} I_{k,1} = K_{p,n} \integral[W_k]{
			|f_{k,\delta}(x)|^p |\grad \chi_k(x)|_{\tilde g}^p
		}[V_{\tilde g}(x)] \le C \integral[W_k]{|f_{k,\delta}(x)|^p}[V_{\tilde g}(x)],
	\end{align*}
	where $\displaystyle C := \max_{x \in \support\! \chi_k} |\grad \chi_k(x)|_{\tilde g}$.
	By the $L^p$-convergence of $f_{k,\delta}$ as $\delta \to 0$, we furthermore get that $\displaystyle \lim_{\sigma \to 0} I_{k,1}$ is uniformly bounded in $\delta$.

	For the second integrals $I_{k,2}$ we can repeat the calculations of \eqref{snormconv} up to \eqref{regcontraction}, leading to
	\begin{align*}
		I_{k,2} & \le (1+o_\epsilon) \integral[M]{
			\integral[M]{
				\frac {|f(x)-f(y)|^p}{\tilde d(x,y)^p} \rho_\sigma \left( \frac{1-\epsilon}{1+\epsilon} \tilde d(x,y) \right)
				}[V_{\tilde g}(y)]
			}[V_{\tilde g}(x)] \\
			& = (1+o_\epsilon) \integral[M]{
				\integral[M]{
					\frac{|f(x)-f(y)|^p}{d(x,y)^p} \rho_\sigma(d(x,y))
				}[V_g(y)]
			}[V_g(x)],
	\end{align*}
	where in the last line we switched back to the metric $g$ and absorbed the occurring factors into $o_\epsilon$, which converges to 0 as $\epsilon \to 0$.
	Thus, the limit superior of $I_{k,2}$ as $\sigma \to 0$ is finite by our assumption \eqref{limsupfinite}, and even uniformly bounded in $\delta > 0$.

	We conclude by observing that taking the limit $\sigma \to 0$ in the left-hand side of \eqref{gtildeest} yields \newline $K_{p,n} \integral[M]{|\grad f_\delta(x)|_{\tilde g}^p}[V_{\tilde g}]$, thus passing to the original metric, we have showed \eqref{variationsbound}.
\end{proof}
\\

Using suitable radial mollifiers leads to the $s$-seminorm and thus to Corollary \ref{convergencesemi}:\\
\\
\begin{proof}[Corollary \ref{convergencesemi}]
	Define radial mollifiers $\rho_\sigma, \sigma > 0$, by
\begin{equation*}
	\rho_\sigma (r) := \begin{cases}
		\frac{\sigma p}{\haumeas{n-1}(\unitsphere n)} \frac 1 {r^{n-\sigma p}}, & 0 < r < 1, \\
		0, & r \ge 1.
	\end{cases}
\end{equation*}
and set $s := 1 - \sigma$.
We claim, that
\begin{equation*}
	\lim_{\sigma \to 0} \integral[M]{
			\integral[M]{
				\frac{|f(x)-f(y)|^p}{d(x,y)^p} \rho_\sigma(d(x,y))
			}[V_g(y)]
		}[V_g(x)] = \lim_{s \to 1^-} \frac{(1-s)p}{\haumeas{n-1}(\unitsphere n)} \integral[M]{
		\integral[M]{
			\frac{|f(x)-f(y)|^p}{d(x,y)^{n+sp}}
		}[V_g(y)]
	}[V_g(x)],
\end{equation*}
where by Theorem \ref{convergencerho} the left-hand side is equal to either $K_{p,n} \integral[M]{|\grad f|_g^p}[V_g]$, if $p>1$, or $K_{1,n}|Df|(M)$, if $p=1$.
To see this, we only need to show that
\begin{equation*}
	\lim_{s \to 1^-} (1-s) \integral[M]{
		\integral[\simpleset{y : d(x,y) \ge 1}]{
			\frac{|f(x)-f(y)|^p}{d(x,y)^{n+sp}}
		}[V_g(y)]
	}[V_g(x)] = 0.
\end{equation*}
But this is a simple consequence of
\begin{align*}
	\integral[M]{
		\integral[\simpleset{y : d(x,y) \ge 1}]{
			\frac{|f(x)-f(y)|^p}{d(x,y)^{n+sp}}
		}[V_g(y)]
	}[V_g(x)] & \le  \integral[M]{
		\integral[M]{
			|f(x)-f(y)|^p
		}[V_g(y)]
	}[V_g(x)] \\
	& \le 2^p \text{Vol}_g(M) \norm[L^p]{f}^p.
\end{align*}
\end{proof}

\subsection*{Acknowledgement}

The authors would like to thank Monika Ludwig for helpful comments and suggestions during the preparation of this paper.


\bibliographystyle{plainnoand}
\bibliography{fracmf}

\begin{minipage}[t]{.46\textwidth}
	Andreas Kreuml\\
	{\small
	Institut f\"ur Diskrete Mathematik und Geometrie\\
	Technische Universit\"at Wien\\
	Wiedner Hauptstra{\ss}e 8-10/1046\\
	1040 Vienna, Austria\\
E-mail: andreas.kreuml@tuwien.ac.at}
\end{minipage}
\hfill~\hfill
\begin{minipage}[t]{.46\textwidth}
	Olaf Mordhorst\\
	{\small
	Institut f\"ur Diskrete Mathematik und Geometrie\\
	Technische Universit\"at Wien\\
	Wiedner Hauptstra{\ss}e 8-10/1046\\
	1040 Vienna, Austria\\
E-mail: olaf.mordhorst@tuwien.ac.at}
\end{minipage}

\end{document}